\newtheorem{theorem}{Theorem}[section]
\newtheorem{lemma}[theorem]{Lemma}
\theoremstyle{definition}
\newtheorem{definition}[theorem]{Definition}
\newtheorem{remark}[theorem]{Remark}
\newtheorem{example}[theorem]{Example}
\newtheorem{prop}[theorem]{Proposition}
\numberwithin{equation}{section}
\newcommand{\C}{{\mathbb C}}
\newcommand{\D}{{\mathbb D}}
\newcommand{\E}{{\mathbb E}}
\newcommand{\K}{{\mathbb K}}
\newcommand{\N}{{\mathbb N}}
\newcommand{\Q}{{\mathbb Q}}
\newcommand{\R}{{\mathbb R}}
\newcommand{\T}{{\mathbb T}}
\newcommand{\Z}{{\mathbb Z}}
\renewcommand{\S}{{\mathbb S}}
\newcommand\cA{{\mathcal A}}
\newcommand\cB{{\mathcal B}}
\newcommand\cC{{\mathcal C}}
\newcommand\cD{{\mathcal D}}
\newcommand\cE{{\mathcal E}}
\newcommand\cF{{\mathcal F}}
\newcommand\cG{{\mathcal G}}
\newcommand\cH{{\mathcal H}}
\newcommand\cI{{\mathcal I}}
\newcommand\cK{{\mathcal K}}
\newcommand\cL{{\mathcal L}}
\newcommand\cO{{\mathcal O}}
\newcommand\cM{{\mathcal M}}
\newcommand\cN{{\mathcal N}}
\newcommand\cT{{\mathcal T}}
\newcommand\cW{{\mathcal W}}
\newcommand\cY{{\mathcal Y}}
\newcommand\cP{{\mathcal P}}
\newcommand\cR{{\mathcal R}}
\newcommand\cS{{\mathcal S}}
\newcommand\bA{{\mathbb A}}
\newcommand\bB{{\mathbb B}}
\newcommand\bC{{\mathbb C}}
\newcommand\bD{{\mathbb D}}
\newcommand\bE{{\mathbb E}}
\newcommand\bH{{\mathbb H}}
\newcommand\bG{{\mathbb G}}
\newcommand\bN{{\mathbb N}}
\newcommand\bP{{\mathbb P}}
\newcommand\bQ{{\mathbb Q}}
\newcommand\bR{{\mathbb R}}
\newcommand\bS{{\mathbb S}}
\newcommand\bT{{\mathbb T}}
\newcommand\bV{{\mathbb V}}
\newcommand\bZ{{\mathbb Z}}
\newcommand\Id{{\bf 1}}
\newcommand\Alf{{\mathcal A}}
\newcommand\void{{\varnothing}}
\newcommand{\bg}{\boldsymbol{\gamma}}
\newcommand{\sbv}{\boldsymbol{s}} %%source in BV-diagram
\newcommand{\tbv}{\boldsymbol{t}}  %%target in BV-diagram
\newcommand{\sg}{\boldsymbol{s}} %%source in graph
\newcommand{\tg}{\boldsymbol{t}}  %%target in graph
\newcommand{\ld}[1]{{}^{\dagger}#1}  %%leftt dagger
\newcommand{\rd}[1]{#1^{\dagger}}  %%right dagger
\newcommand{\IL}{\underleftarrow\lim}  %%Inver limit space
\newcommand{\BV}{\text{\tiny BV}}
\newcommand{\orb}{\operatorname{orb}}
\def\err{\mbox{err}}
\def\itin{\boldsymbol{i}}
\newcommand{\tb}{|\!|\!|}
\newcommand{\1}{\boldsymbol{1}}
\newcommand\ve{\varepsilon}
\newcommand\eps{\epsilon}
\newcommand\vf{\varphi}
\def\eps{\varepsilon}
\def\ph{\varphi}
\def\GC{\boldsymbol{G}}
\def\ie{{\em i.e.,} }
\begin{document}

%%%%% To ease editing, for IMPAN journals add:

\baselineskip=17pt

	\title{Translation algorithms for graph covers}
	\author{Jan Boro\'nski, Henk Bruin, Przemys{\l}aw Kucharski}
	
	\date{\today}

	\begin{abstract}
		Graph covers are a way to describe continuous maps (and homeomorphisms) of a Cantor set, more generally than e.g.\ Bratteli-Vershik systems.
		Every continuous map on a zero-dimensional compact set can be expressed by a graph cover (e.g.\ non-minimality or aperiodicty are no restrictions).
		We give a survey on the construction, properties and some special cases of graph covers.
	\end{abstract}

\subjclass[2020]{Primary 37B05; Secondary 57H20, 37B10}

\keywords{graph cover, Cantor system, Bratteli-Vershik system, $S$-adic transformations, Rauzy graph}

\maketitle

% 	
% 	\subjclass{Primary: 37B05, Secondary: 57H20, 37B10}
% 	\keywords{graph cover, Cantor system, Bratteli-Vershik system, $S$-adic transformations, Rauzy graph}

	\bigskip
	
	Graph covers were introduced by Gambaudo \& Martens \cite{GM06} as a general way to describe minimal Cantor systems, and they used this, among other things, to construct Cantor systems with unusual Choquet simplices of invariant measures.
	Akin, Glasner \& Weiss \cite{AGW08} used a similar approach to construct Cantor systems whose conjugacy class is a dense $G_\delta$-set within the class of all Cantor systems.
	Finding such universal systems was also the motivation of Shimomura
	\cite{Shi14} to study graph covers, and in a series of papers, he established many properties of graph covers, including that all Cantor systems (whether minimal, distal, aperiodic or none of these)
	can be represented as a graph cover. In particular, he proved \cite[Theorem 3.9]{Shi14}:
	
	\begin{theorem}[Shimomura]
		Every continuous map on a zero-dimensional space is conjugate to some graph cover.
	\end{theorem}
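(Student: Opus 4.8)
The plan is to build the graph cover directly from the dynamics, using the fact that a zero-dimensional compact metric space admits a refining sequence of finite clopen partitions. Concretely, I would fix finite clopen partitions $\mathcal{P}_0 \prec \mathcal{P}_1 \prec \cdots$ with each $\mathcal{P}_{n+1}$ refining $\mathcal{P}_n$ and with mesh tending to $0$, so that together they separate points and generate the topology. For each $n$ I would take the vertex set of the graph $G_n$ to be the set of atoms of $\mathcal{P}_n$, and place a directed edge from an atom $U$ to an atom $V$ precisely when $f(U)\cap V\neq\void$, i.e.\ when some point of $U$ is mapped by $f$ into $V$. Thus $G_n$ is a finite directed graph recording the coarse, partition-level behaviour of $f$.

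Next I would define the bonding (cover) maps. Since $\mathcal{P}_{n+1}$ refines $\mathcal{P}_n$, every atom of $\mathcal{P}_{n+1}$ lies in a unique atom of $\mathcal{P}_n$, and this inclusion rule is the vertex map $\varphi_n\colon V(G_{n+1})\to V(G_n)$. I would then check it is a graph homomorphism: if $U'\subseteq U$ and $V'\subseteq V$ with an edge $U'\to V'$, then $f(U)\cap V\supseteq f(U')\cap V'\neq\void$, giving an edge $U\to V$. I would also verify the edge-lifting (covering) property demanded by the definition of a graph cover: given an edge $U\to V$ in $G_n$, choose $x\in U$ with $f(x)\in V$; the atoms $U'\ni x$ and $V'\ni f(x)$ of $\mathcal{P}_{n+1}$ then satisfy $f(U')\ni f(x)\in V'$, so $U'\to V'$ is an edge of $G_{n+1}$ lying over $U\to V$. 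Hence each $\varphi_n$ is surjective on vertices and edges.

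Finally I would identify the resulting system with $(X,f)$. Sending $x$ to the sequence of atoms $(U_n(x))_n$ containing it gives a map $X\to\varprojlim V(G_n)$ that is continuous, injective (the partitions separate points) and surjective (by compactness), hence a homeomorphism; this is the candidate conjugacy. Under it, the itinerary of $f(x)$ is an admissible successor of the itinerary of $x$, because each pair $\bigl(U_n(x),U_n(f(x))\bigr)$ spans an edge. The crucial point, and the one I expect to be the main obstacle, is to show that the \emph{multivalued} partition-level transition relation encoded by the graphs collapses to the single-valued map $f$ in the inverse limit. For this I would note that the possible images of $x$ at level $n$ are exactly the atoms meeting $f(U_n(x))$, and that $\bigcap_n f(U_n(x))=\{f(x)\}$: any $y$ in the intersection is $f(x_n)$ for points $x_n\in U_n(x)$, so $x_n\to x$ and $y=f(x)$ by continuity together with mesh$\to 0$. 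Thus determinism is recovered in the limit and the induced map coincides with $f$. The remaining work is bookkeeping: confirming that these data meet the precise axioms of a graph cover and that the limit dynamics is genuinely $f$ rather than merely a factor or extension.
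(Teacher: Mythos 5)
Your skeleton --- a refining sequence of finite clopen partitions, transition graphs with an arrow $U \to V$ whenever $f(U) \cap V \neq \void$, and inclusion of atoms as bonding maps --- is exactly the strategy of Shimomura's construction, which the paper does not reprove but cites as \cite[Theorem 3.9]{Shi14}. However, there is a genuine gap: with an \emph{arbitrary} refining sequence of partitions, your bonding maps are edge surjective (your lifting argument for that is fine) but in general \emph{not positive directional}, which is axiom (ii) of the paper's definition of a graph cover. Positive directionality requires that any two arrows of $\GC_{n+1}$ with the same source project to arrows with the same target in $\GC_n$; in your construction an atom $U'$ of $\mathcal{P}_{n+1}$ can have $f(U') \cap V_1' \neq \void \neq f(U') \cap V_2'$ with $V_1', V_2'$ lying in \emph{different} atoms $V_1 \neq V_2$ of $\mathcal{P}_n$, since nothing prevents $f(U')$ from straddling several atoms of the coarser partition. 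This is not mere bookkeeping: without (ii) the rule \eqref{eq:follow-arrow} does not define a map on the inverse limit at all, and your recovery of determinism via $\bigcap_n f(U_n(x)) = \{f(x)\}$ uses the points of $X$, i.e.\ it presupposes the conjugacy instead of exhibiting a bona fide graph cover whose intrinsic combinatorial dynamics is $f$.

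The missing idea is to build control of $f$ into the partitions before passing to the limit: choose $\mathcal{P}_{n+1}$ to refine the common refinement $\mathcal{P}_n \vee f^{-1}\mathcal{P}_n$, so that $f$ maps each atom of $\mathcal{P}_{n+1}$ into a \emph{single} atom $W$ of $\mathcal{P}_n$; then any two edges $U' \to V_1'$ and $U' \to V_2'$ have $V_1', V_2'$ meeting $W$, forcing $V_1 = V_2 = W$, and (ii) holds. Alternatively, telescope: distinct atoms of a finite clopen partition of a compact space lie at positive mutual distance $\delta_m$, and uniform continuity of $f$ yields $n(m)$ with $\operatorname{diam} f(U') < \delta_m$ for every atom $U'$ of $\mathcal{P}_{n(m)}$, which again places $f(U')$ inside a single atom of $\mathcal{P}_m$. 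Your uniform-continuity argument in the last paragraph is the germ of this fix, but it must be applied at the level of the graphs; with that repair, the rest of your proof (edge surjectivity, the homeomorphism $X \cong \IL V(\GC_n)$, and the identification of the limit dynamics with $f$) goes through and matches the cited construction.
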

	
	The technique is used in \cite[Theorem 1.1]{Shi20} to extend results of Herman et al.\ \cite{HPS92} and Medynets \cite{Med06} to: every homeomorphism on a zero-dimensional set has a representation as a Bratteli-Vershik system. A variant of this method was also used by Good \& Meddaugh \cite{GoodM} to give a characterization of shadowing in terms of inverse limits of shifts of finite type satisfying the Mittag-Leffler condition. Other applications include characterization of Cantor systems that can be embedded into real line with vanishing derivative everywhere \cite{BKO2,BKO3,GO}, the construction of completely scrabled systems with transitivity \cite{Shi16PAMS}, and mixing \cite{BKO3}, and almost minimal systems \cite{FGP}. Related graph theoretic approach was used in \cite{BD} by Bernardes \& Darji to characterize when two Cantor systems are conjugate.
	An application of graph covers to show that particular infinitely renormalizable
	Lorenz maps on the interval are not uniquely ergodic on their Cantor attractor was given by Martens \& Winckler \cite{MW18}.
	A generalization of the method to higher dimensional systems was proposed in \cite{Ku}.

	We aim to give a brief overview of the concept of graph covers and prove some relations to other constructions of (minimal) Cantor systems.
	We present our own proofs, bypassing some of the constructions of Shimomura. We include questions in some of the sections. 
	
	\section{Graph Covers}
	
	Let $(\GC_n)_{n \geq 0}$ be a sequence of directed graphs
	(or rather edge-sets of directed graphs).
	We call the directed edges $\gamma_n \in \GC_n$ arrows.
	They connect the vertices $\gamma_n = (v \to v')$;
	we write $v = \sg(\gamma_n)$ and $v' = \tg(\gamma_n)$
	for the source and target of the arrow.
	We stipulate that every vertex has at least one outgoing and at least one incoming arrow.
	Therefore $\GC_n$ has no end-points, only regular vertices (with exactly one outgoing and one incoming arrow) and branch-points.
	The graph $\GC_0$ consists of a single vertex $\epsilon$ from which a finite number of directed loops $\gamma:(\epsilon\to\epsilon)$ emerge.
	
	A graph cover is the inverse limit space of directed graphs $\GC_n$:
	$$
	\GC = \IL(\GC_n,\pi_n)
	= \{ (\gamma_n)_{n \geq 0} : \pi_{n+1}(\gamma_{n+1}) = \gamma_n \in \GC_n \text{ for all } n \geq 0\},
	$$
	where the $\pi_n$'s are called the bonding maps.

	For each $\gamma \in \GC_n$,
	$\pi_n(\gamma)$ is a single arrow in $\GC_{n-1}$ such that
	if $\sg(\gamma) = \tg(\gamma')$, then  $\sg(\pi_n(\gamma)) = \tg(\pi_n(\gamma'))$.
	In particular, $\pi_n$ preserves the direction of the arrows.
	
	We stipulate the following properties of the graphs and bonding maps:
	\\
	(i)  the $\pi_n$'s are {\bf edge surjective}:
	\begin{quote}
		For each arrow $\gamma' \in \GC_{n-1}$, there is an arrow $\gamma \in \GC_n$ such that $\pi_n(\gamma)=\gamma'$.
	\end{quote}
	(ii) the $\pi_n$'s are {\bf positive directional}:
	\begin{quote}
		If $\gamma$ and $\gamma'$ are two arrows in $\GC_n$ starting at the same vertex $\sg(\gamma) = \sg(\gamma')$,
		then $\tg(\pi_n(\gamma)) = \tg(\pi_n(\gamma'))$ is the same vertex in $\GC_{n-1}$.
	\end{quote}
	Equipped with product topology, $\GC$ is a compact zero-dimensional set.
	We can define a map $f:\GC\to\GC$ by ``moving one step''
	on the vertex sets along the arrows:
	If $\bg = (\gamma_n)_{n \in \N} \in \GC$, then
	\begin{equation}\label{eq:follow-arrow}
	f(\gamma)_n = \gamma'_n \qquad \text{ provided that } \tg(\gamma_n) = \sg(\gamma'_n).
	\end{equation}
	For a single $n$ this definition is ambiguous, but in connection with the bonding maps (namely, that the concatenation $\gamma_n\gamma'_n$
	must lie in the image $\pi_{n+1} \circ \cdots \circ \pi_{n+m}(\gamma_{n+m}\gamma'_{n+m})$ for $m$ sufficiently large) this ambiguity is resolved.
	
	Not part of the required properties, but
	a graph cover is called {\bf negative directional} if the following holds for each $n \in \N$:
	\begin{quote}
		If $\gamma_n$ and $\gamma'$ are two arrows in $\GC_n$ ending at the same vertex $\tg(\gamma) = \tg(\gamma')$,
		then $\sg(\pi_n(\gamma)) = \sg(\pi_n(\gamma'))$ is the same vertex in $\GC_{n-1}$.
	\end{quote}
	A graph that is both positive and negative directional is called {\bf bi-directional}.
	The following result is \cite[Theorem 3.9]{Shi14}.
	
	\begin{theorem}[Shimomura]\label{thm:graph_cover}
		The graph cover $(\GC,f)$ is a continuous map on a compact zero-dimensional space.
		If  $(\GC,f)$ is bi-directional, then $f$ is a homeomorphism.
	\end{theorem}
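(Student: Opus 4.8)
The plan is to treat the two assertions separately: first that $f$ is a well-defined continuous self-map of the compact zero-dimensional space $\GC$, and then, under bi-directionality, that $f$ is invertible. That $\GC$ is compact and zero-dimensional is immediate: it is a closed subset (cut out by the bonding relations $\pi_{n+1}(\gamma_{n+1})=\gamma_n$) of the product $\prod_{n\ge 0}\GC_n$ of finite discrete edge-sets, and edge surjectivity guarantees $\GC\ne\void$. The real content is that the ``follow one arrow'' prescription \eqref{eq:follow-arrow} actually determines a single point of $\GC$, and this is where I expect the main difficulty to lie.

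For well-definedness, fix $\bg=(\gamma_n)\in\GC$ and set $w_n:=\tg(\gamma_n)$. First I would record that $(w_n)_n$ is a compatible sequence of vertices: since $\pi_n$ is a graph homomorphism (as follows from its defining compatibility with concatenation), $w_{n-1}=\tg(\gamma_{n-1})=\tg(\pi_n(\gamma_n))$ is the image of $w_n$ under the induced vertex map. I would then consider, at each level, the nonempty finite set $A_n$ of arrows emanating from $w_n$; the vertex map carries $A_n$ into $A_{n-1}$, so $(A_n,\pi_n|_{A_n})$ is an inverse system of nonempty finite sets and hence has nonempty inverse limit, which yields existence of a successor sequence $(\gamma'_n)$ with $\sg(\gamma'_n)=w_n$. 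For uniqueness I would invoke positive directionality: any two arrows out of a common vertex have images under $\pi_n$ with the same target, so projecting the out-arrows of $w_{n+m}$ down to level $n$ produces a nested, decreasing family of arrow-sets all with source $w_n$ and with a common target vertex that stabilises as $m\to\infty$; combined with the requirement that the length-two path $\gamma_n\gamma'_n$ be the image of $\gamma_{n+m}\gamma'_{n+m}$ for large $m$, this pins down $\gamma'_n$ and resolves the ambiguity noted after \eqref{eq:follow-arrow}.

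Continuity of $f$ I expect to be routine once well-definedness is in hand. The stabilisation above shows that for each fixed $n$ there is an $m$ with $f(\bg)_n=\pi_{n+1}\circ\cdots\circ\pi_{n+m}(\delta)$ for any arrow $\delta$ out of $\tg(\gamma_{n+m})$; since the graphs are finite this $m$ can be chosen uniformly on the clopen cylinder determined by $\gamma_{n+m}$, so $\bg\mapsto f(\bg)_n$ is locally constant. As this holds for every $n$, the map $f$ is continuous into the product topology.

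For the second assertion I would run the entire argument with the arrows reversed. Negative directionality is exactly the hypothesis that makes the ``follow one arrow backwards'' prescription single-valued, so it defines, by the mirror image of the above, a continuous map $g\colon\GC\to\GC$ with $\tg(g(\bg)_n)=\sg(\gamma_n)$. It then remains to check that $g=f^{-1}$: traversing an arrow forward and then the resulting arrow backward returns the original arrow at every level, and symmetrically, so $g\circ f=f\circ g=\mathrm{id}_{\GC}$. Hence $f$ is a continuous bijection of a compact Hausdorff space, with continuous inverse $g$ exhibited directly, and is therefore a homeomorphism. The main obstacle throughout is the single-valuedness of $f$ (and of $g$): everything else is either standard inverse-limit compactness or a formal consequence of finiteness of the graphs, whereas pinning down the unique successor arrow is precisely what positive (respectively negative) directionality is designed to guarantee.
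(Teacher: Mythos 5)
The paper itself gives no proof of this theorem: it is quoted verbatim from Shimomura and attributed to \cite[Theorem 3.9]{Shi14}, so there is no in-paper argument to compare against. Judged on its own, your proof is correct, and it is essentially the standard argument. One refinement and one caution on the key step. Your uniqueness argument can be shortened: by positive directionality, every arrow in $\pi_{n+1}(A_{n+1})$ has source $w_n$ \emph{and} all of them share a common target, and since an arrow of $\GC_n$ is by convention an ordered pair of vertices $\gamma=(v \to v')$ (no parallel edges), the set $\pi_{n+1}(A_{n+1})$ is already a singleton --- the collapse happens after a single application of $\pi_{n+1}$, so no stabilisation as $m\to\infty$ is needed. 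The caution: you use the ``arrows are determined by their endpoints'' convention only tacitly; in a genuine multigraph setting, ``common source and common target'' would not pin down $\gamma'_n$, and your supplementary appeal to the concatenation requirement would be circular, since $\gamma'_{n+m}$ is itself part of what is being defined. Spelling out the singleton observation removes this worry and also streamlines continuity: $f(\bg)_n=\pi_{n+1}(\delta)$ for \emph{any} out-arrow $\delta$ of $\tg(\gamma_{n+1})$, so $f(\bg)_n$ depends only on the $(n{+}1)$-st coordinate of $\bg$ and each coordinate map is locally constant with no uniformity argument required. Your remaining steps --- nonemptiness and compactness of the inverse limit, the induced vertex maps (which, note, use the standing hypothesis that every vertex has an incoming arrow), and the bi-directional case via the mirror map $g$ with the level-wise check $g\circ f=f\circ g=\mathrm{id}_{\GC}$ --- are all sound.
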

	
	In order to agree with other graph covering constructions, we can speed up this process by ``telescoping'' between $n$'s where there is a bi-special word.
	This point of view, which holds for shifts in general and in fact for all continuous Cantor systems,
	was introduced by Gambaudo \& Martens \cite{GM06} and studied by several authors, especially Shimomura,
	see e.g.\ \cite{Shi14,Shi16,Shi20}.

	\subsection{Weighted graph covers}
	We can replace paths $\gamma_1 \dots \gamma_k$ in $\GC_n$
	with only regular interior vertices $\tg(\gamma_i)$, $1 \leq i < k$, by a single arrow $\gamma$ with $\sg(\gamma) = \sg(\gamma_1)$,
	$\tg(\gamma) = \tg(\gamma_k)$ and weight $w(\gamma) = k$.
	The rule of $\pi_n$ for such weighted arrows is
	that $\pi_n(\gamma) = \gamma'_1 \dots \gamma'_{\ell}$ in $\GC_{n-1}$
	is only possible if $w(\gamma) = \sum_{j=1}^{\ell} w(\gamma'_j)$.
	That is, $\pi_n$ distributes the weight of $\gamma$ as it were over
	$\gamma'_1, \dots, \gamma'_{\ell}$.
	The map $f:\GC \to \GC$ is adapted accordingly, by allowing steps of size $1/w(\gamma)$ inside arrows $\gamma$ of weight $w(\gamma)$.
	
	As an example, let $\GC_n$ consist of a single vertex
	with two directed loops $\gamma_n$ and $\gamma'_n$.
	The weights $w(\gamma_0) = w(\gamma'_0) = 1$ and assume that
	$$
	\begin{cases}
	\pi_{1}(\gamma'_{1}) = \gamma_0, \qquad \pi_1(\gamma_1) = \underbrace{\gamma_0 \dots \gamma_0}_{a_1 \text{ \small times }} \\
	\pi_{n+1}(\gamma'_{n+1}) = \gamma_n, \qquad
	\pi(\gamma_{n+1}) = \underbrace{ \gamma_n \dots \gamma_n }_{a_{n+1}\  \text{\small times}} \gamma'_n & \text{ if } n \geq 1.
	\end{cases}
	$$
	for some sequence $(a_n)_{n \geq 1}$ in $\N$. This implies that
	$$
	w(\gamma'_{n+1}) = w(\gamma_n), \qquad w(\gamma_{n+1}) = a_n w(\gamma_n) + w(\gamma_{n-1}),
	$$
	which corresponds to the Ostrowski numeration of rotation (or in fact Sturmian shift) of the angle with continued fraction $\alpha = [0;a_1,a_2,a_3,\dots]$, see Figure~\ref{fig:ostrowski}.
	The weight $w(\gamma)$ are the denominators of the convergents of $\alpha$.

	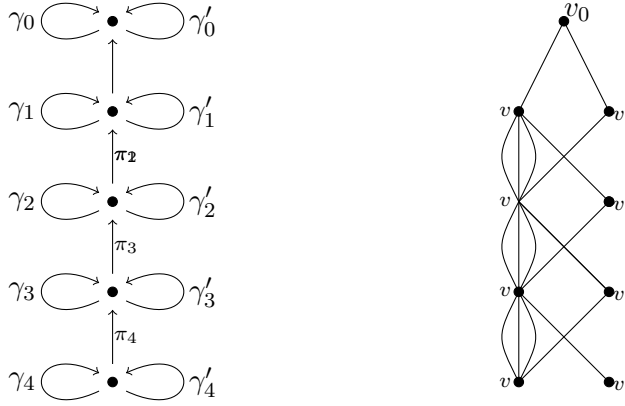
\begin{figure}[ht]
		\begin{center}
			\begin{tikzpicture}[scale=0.6]
			\filldraw(2,10) circle (3pt);
			\draw[->, draw=black] (1.7, 9.8) .. controls (0, 9) and (0,11) .. (1.7, 10.2); \node at (0,10) {\small $\gamma_0$};
			\draw[->, draw=black] (2.3, 9.8) .. controls (4, 9) and (4, 11) .. (2.3, 10.2); \node at (4,10) {\small $\gamma'_0$};
			\draw[->] (2,8.4)--(2,9.6); \node at (2.3,7) {\tiny $\pi_1$};
			\filldraw(2,8) circle (3pt);
			\draw[->, draw=black] (1.7, 7.8) .. controls (0, 7) and (0,9) .. (1.7, 8.2); \node at (0,8) {\small $\gamma_1$};
			\draw[->, draw=black] (2.3, 7.8) .. controls (4, 7) and (4,9) .. (2.3, 8.2); \node at (4,8) {\small $\gamma'_1$};
			\draw[->] (2,6.4)--(2,7.6); \node at (2.3,7) {\tiny $\pi_2$};
			\filldraw(2,6) circle (3pt);
			\draw[->, draw=black] (1.7, 5.8) .. controls (0, 5) and (0,7) .. (1.7, 6.2); \node at (0,6) {\small $\gamma_2$};
			\draw[->, draw=black] (2.3, 5.8) .. controls (4, 5) and (4,7) .. (2.3, 6.2); \node at (4,6) {\small $\gamma'_2$};
			\filldraw(2,4) circle (3pt);
			\draw[->] (2,4.4)--(2,5.6); \node at (2.3,5) {\tiny $\pi_3$};
			\draw[->, draw=black] (1.7, 3.8) .. controls (0, 3) and (0,5) .. (1.7, 4.2); \node at (0,4) {\small $\gamma_3$};
			\draw[->, draw=black] (2.3, 3.8) .. controls (4, 3) and (4,5) .. (2.3, 4.2); \node at (4,4) {\small $\gamma'_3$};
			\filldraw(2,2) circle (3pt);
			\draw[->] (2,2.4)--(2,3.6); \node at (2.3,3) {\tiny $\pi_4$};
			\draw[->, draw=black] (1.7, 1.8) .. controls (0, 1) and (0,3) .. (1.7, 2.2); \node at (0,2) {\small $\gamma_4$};
			\draw[->, draw=black] (2.3, 1.8) .. controls (4, 1) and (4,3) .. (2.3, 2.2); \node at (4,2) {\small $\gamma'_4$};
			%\draw[->] (2,0.4)--(2,1.6); \node at (2.3,1) {\tiny $\pi_5$};
			%%%%%%
			\filldraw(12,10) circle (3pt); \node at (12.3,10.2) {\small $v_0$};
			\draw[-] (12,10)--(11,8); \draw[-] (12,10)--(13,8);
			\filldraw(11,8) circle (3pt);  \filldraw(13,8) circle (3pt);
			\node at (10.7, 8) {\tiny $v$}; \node at (13.3,8) {\tiny $v'$};
			\draw[-] (11,6)--(11,8);
			\draw[-, draw=black] (11,6) .. controls (10.5, 7)  .. (11,8);
			\draw[-, draw=black] (11,6) .. controls (11.5, 7)  .. (11,8);
			\draw[-] (11,6)--(13,8); \draw[-] (13,6)--(11,8);
			\filldraw(13,6) circle (3pt);  \filldraw(13,6) circle (3pt);
			\node at (10.7, 6) {\tiny $v$}; \node at (13.3,6) {\tiny $v'$};
			\draw[-] (11,4)--(11,6); \draw[-] (11,4)--(13,6); \draw[-] (13,4)--(11,6);
			\draw[-, draw=black] (11,4) .. controls (10.5, 5)  .. (11,6);
			\draw[-, draw=black] (11,4) .. controls (11.5, 5)  .. (11,6);
			\filldraw(11,4) circle (3pt);  \filldraw(13,4) circle (3pt);
			\node at (10.7, 4) {\tiny $v$}; \node at (13.3, 4) {\tiny $v'$};
			\draw[-] (11,2)--(11,4); \draw[-] (11,2)--(13,4); \draw[-] (13,4)--(11,6);
			\draw[-, draw=black] (11,2) .. controls (10.5, 3)  .. (11,4);
			\draw[-, draw=black] (11,2) .. controls (11.5, 3)  .. (11,4);
			\filldraw(11,2) circle (3pt);  \filldraw(13,2) circle (3pt);
			\draw[-] (13,2)--(11,4);
			\node at (10.7, 2) {\tiny $v$}; \node at (13.3, 2) {\tiny $v'$};
			%%%%%%
			\end{tikzpicture}
			\caption{\label{fig:ostro} Weighted graph cover (left) and
				ordered Bratteli diagram (right) for the Ostrowski numeration with $a_n \equiv 3$
				and
				$\pi_n(\gamma_n) = { \gamma_{n-1} \cdots \gamma_{n-1} } \gamma'_{n-1}$,
				$\pi_n(\gamma'_n) = \gamma_{n-1}$.
			}
			\label{fig:ostrowski}
		\end{center}
	\end{figure}
	
	The action of $\pi_n$ in a weighted graph cover can be captured in a so-called {\bf winding matrix} $W^n$
	which has size $\# \GC_n \times \# \GC_{n-1}$ and elements
	$W_{\gamma,\gamma'}^n = \#\{ \text{appearances of } \gamma' \text{ in } \pi_n(\gamma)\}$.
	For instance, if the row vector $\vec x = (x_\gamma)_{\gamma \in \GC_n}$ represents a path in $\GC_n$ and
	$x_{\gamma}$ the number of times this path goes through $\gamma$, then
	$\vec x' = (x'_{\gamma'})_{\gamma' \in \GC_{n-1}} = \vec x W^n$
	denotes the number of times the $\pi_n$-images of this path goes through $\gamma'$. Conversely, if $\vec w' = (w'_{\gamma'})_{\gamma' \in \GC_{n-1}}$
	is the column vector of weights of the arrows in $\GC_{n-1}$, then
	$\vec w = (w_{\gamma})_{\gamma \in \GC_n} =  W^n \vec w'$ is the vector of weights of the arrows in $\GC_n$
	
	The winding matrix of a telescoping between $\GC_m$ and $\GC_n$
	for $n \geq m$ is $W = W_n W_{n-1} \cdots W_{m+1}$.
	Winding matrices are the equivalent of what is called incidence matrix or associated matrix in other representations (e.g.\ S-adic or Bratteli-Vershik) of a Cantor system.
	
	\newpage
	
	\subsection{Miscellaneous properties}

	\begin{theorem}
		A graph cover is
		\begin{enumerate}
			\item {\bf chain-transitive} if and only if every $\GC_n$ is
			connected, i.e., between every two $\gamma,\gamma' \in \GC_n$ there is a path with $\gamma$ as first and $\gamma'$ as last edge;

			\item {\bf transitive} if and only if  every $\GC_{n}$ is connected and for every $m$ there exists an $n \geq m$, such that for every loop $\ell_{m}$ in $\GC_m$ there exists a loop $\ell_{n}$ in $\GC_{n}$ with $\ell_m \subset \pi_{m,n}(\ell_{n})$;

			\item {\bf minimal} if and only if for every $m$ there is an $n \geq m$ such that $\GC_m \subset \pi_{m,n}(\ell)$
			for every loop $\ell$ in $\GC_n$;

			\item {\bf uniquely ergodic} if and only if,
			for every $n \in \N$, $\bigcap_{m \geq n}  \R_{\geq 0}^{\# \GC_m}W^{m} \cdot W^{m-1} \cdots W^n$ is a single halfline
			in $\R_{\geq 0}^{\# \GC_n}$.
		\end{enumerate}
	\end{theorem}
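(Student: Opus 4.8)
The plan is to set up a dictionary between the topological dynamics of $(\GC,f)$ and the edge-combinatorics of the graphs $\GC_n$, and then read off each of the four equivalences from it. For each $n$ the cylinders $[\gamma_n] := \{\bg = (\gamma_m)_{m\ge 0} \in \GC : \gamma_n \text{ is the } n\text{-th coordinate}\}$, $\gamma_n \in \GC_n$, form a finite clopen partition $\cP_n$ of $\GC$; since the $\pi_n$ are edge surjective these partitions refine as $n$ grows and generate the product topology, so every $\epsilon>0$ is resolved by some $\cP_n$ (all its cylinders have diameter $<\epsilon$). The defining rule \eqref{eq:follow-arrow} of $f$ gives the basic transition law: if $\bg \in [\gamma_n]$ then $f(\bg) \in [\gamma'_n]$ for some arrow with $\sg(\gamma'_n) = \tg(\gamma_n)$, so a genuine orbit segment $\bg, f(\bg), \dots, f^k(\bg)$ reads off, at level $n$, a directed path $\gamma_n^{(0)}\gamma_n^{(1)}\cdots\gamma_n^{(k)}$ in $\GC_n$, and conversely the positive-directional property together with edge surjectivity let us realise prescribed paths by actual orbits up to level $n$. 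The compatibility of these paths across levels is exactly recorded by the composite bonding maps $\pi_{m,n} = \pi_{m+1}\circ\cdots\circ\pi_n$.

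I take parts (1) and (3) together, as they are the two ``for all'' statements. For chain-transitivity: given $x,y$ and $\epsilon>0$, pick $n$ resolving $\epsilon$; an $\epsilon$-chain from $x$ to $y$ is, at level $n$, a directed path from the arrow carrying $x$ to the arrow carrying $y$, so $\epsilon$-chains between all pairs exist for all $\epsilon$ iff every $\GC_n$ is connected in the stated edge sense. For minimality I use that $(\GC,f)$ is minimal iff the orbit of every point meets every cylinder $[\gamma_m]$. By compactness and finiteness of $\GC_n$, any forward orbit read at level $n$ eventually runs around a loop $\ell$ in $\GC_n$, and its projection to level $m$ is contained in $\pi_{m,n}(\ell)$; thus every orbit meets every edge of $\GC_m$ precisely when, for the given $m$, some $n\ge m$ has $\GC_m \subseteq \pi_{m,n}(\ell)$ for \emph{every} loop $\ell$ in $\GC_n$, which is the stated condition.

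For transitivity I use the equivalent open-set formulation: for all nonempty clopen $[\gamma_m],[\gamma'_m]$ there is $k$ with $f^k([\gamma_m]) \cap [\gamma'_m] \neq \void$. Connectedness of each $\GC_n$ supplies combinatorial paths between any two edges; the content of the loop condition is that such a path can be realised by a single orbit, obtained by lifting a loop $\ell_m$ through $\gamma_m$ to a loop $\ell_n$ at a higher level with $\ell_m \subseteq \pi_{m,n}(\ell_n)$, so that traversing $\ell_n$ produces an orbit visiting both $[\gamma_m]$ and $[\gamma'_m]$; assembling these over all $m$ by a diagonal/Baire argument yields a point with dense orbit. For unique ergodicity I identify an $f$-invariant probability measure with the system of its cylinder-masses: the row vector $\mu_n = (\mu[\gamma_n])_{\gamma_n \in \GC_n} \in \R_{\ge 0}^{\#\GC_n}$ must be consistent under $\pi_n$, and invariance forces $\mu_m W^m W^{m-1}\cdots W^{n+1}$ to represent $\mu_n$ up to the weight normalisation encoded by the winding matrices. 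Hence the cone of (unnormalised) invariant measures seen at level $n$ is exactly $\bigcap_{m\ge n} \R_{\ge 0}^{\#\GC_m} W^m W^{m-1}\cdots W^n$, and there is a unique invariant probability measure iff this nested intersection of cones collapses to a single halfline, which is the stated criterion.

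The main obstacle throughout is that the dictionary is not literal: $f$ does not map a level-$n$ cylinder onto a single cylinder, and the arrow $\gamma'_n$ appearing in the transition law is only \emph{partly} determined at level $n$ --- the ambiguity in \eqref{eq:follow-arrow} is resolved solely by the tail of the point. The real work is therefore to make precise, using the positive-directional property and edge surjectivity, exactly which finite paths in $\GC_n$ are traversed by genuine orbits as opposed to being merely present in the graph. This distinction is invisible for parts (1) and (3), where the ``for all paths/loops'' hypotheses wash it out, but it is the crux of part (2): one must verify that the lifted loop $\ell_n$ is actually followed by an orbit, not just drawn in $\GC_n$. For part (4) the delicate step is instead the identification of invariant measures with the inverse limit of the weight cones and the reduction of uniqueness to the single-halfline condition, which rests on the duality between the path-counting action $\vec x \mapsto \vec x W^n$ and the weight action $\vec w \mapsto W^n \vec w$ noted above.
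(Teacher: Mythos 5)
Your dictionary between orbits and level-$n$ walks is the right framework, and it is essentially the paper's, but two of the four parts rest on steps that fail or are missing. The decisive flaw is in (3): your claim that ``any forward orbit read at level $n$ eventually runs around a loop $\ell$ in $\GC_n$'' is false --- an infinite directed walk in a finite graph need not be eventually periodic (consider the Rauzy-graph cover of the full shift, where the level-$n$ reading of an orbit is the block coding of an arbitrary binary sequence). What is true, and all the argument needs, is weaker: by pigeonhole on the vertices of $\GC_n$, every sufficiently long orbit segment contains a \emph{closed subwalk} $\ell$, and the hypothesis $\GC_m \subseteq \pi_{m,n}(\ell)$ applied to that closed walk already forces the level-$m$ reading of the orbit to cover all of $\GC_m$, and indeed to do so infinitely often. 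With that repair your ``if'' direction stands. But your ``precisely when'' also silently asserts the converse (minimal $\Rightarrow$ condition), for which you offer nothing; this direction is not formal: if for every $n$ some loop $\ell_n$ misses an edge of $\GC_m$ under projection, one must first fix, by pigeonhole over the finite graph $\GC_m$, a single edge $\ell$ missed along a subsequence $n_k$, and then extract an accumulation point of the cylinders $[\ell_{n_k}]$ whose entire orbit avoids $[\ell]$. This compactness step is exactly how the paper closes the argument, and it is absent from your proposal.

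Part (2) has the same shape of problem, which you yourself flag as ``the crux'' and then leave undone: a single lift $\ell_m \subseteq \pi_{m,n}(\ell_n)$ does not produce an orbit traversing $\ell_n$, since realizability of a level-$n$ path is controlled by all higher levels. The paper resolves this by recursion rather than Baire: using connectedness it takes each $\ell_n$ to pass through every edge of $\GC_n$, applies the hypothesis at every level to obtain a nested sequence of loops with $\ell_n \subset \pi_{n+1}(\ell_{n+1})$ and first edges aligned, and the resulting point $(\gamma_n)_{n\geq 0}$ of the inverse limit then has a dense orbit outright. Your Baire route can be made to work, but only after the same recursive lifting is carried out to verify the open-set condition $f^k([\gamma_m]) \cap [\gamma'_m] \neq \void$, so it saves nothing. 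Moreover, the $\Rightarrow$ direction of (2) is entirely missing from your proposal; the paper gets it by taking a dense-orbit point $x$, running it until its level-$m$ reading has covered a given loop $\ell_m$ and it has returned to its level-$n$ cylinder, so that the traversed level-$n$ edges close up into a loop $\ell_n$ with $\ell_m \subset \pi_{m,n}(\ell_n)$. Parts (1) and (4) of your proposal are at the level of detail of the paper itself, which cites Shimomura for chain-transitivity and the Bratteli--Vershik literature for unique ergodicity; your identification of the invariant cone at level $n$ with $\bigcap_{m \geq n} \R_{\geq 0}^{\# \GC_m} W^m \cdots W^n$ is the standard winding-matrix argument the paper alludes to.
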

	
	\begin{example}
		In the left panel of Figure~\ref{fig:gc_bounded-length},
		$\GC$ has a loop of finite weight, and this precludes minimality.
		Therefore in minimal (weighted) graph covers, the minimal weight of loops has to tend to infinity.
		In the right panel,  of Figure~\ref{fig:gc_bounded-length},
		$\GC$ has an arrow of finite weight, showing that in the theorem above, ``loops'' cannot be replaced by ``arrows''.
		\begin{figure}[ht]
			\begin{center}
				\begin{tikzpicture}[scale=0.7]
				\node[circle, draw=black] at (-1,3) {};
				\draw[->, draw=black] (-0.7, 3.5) .. controls (0, 5) and (-2, 5) .. (-1.3, 3.5);
				\node at (-0.2,4) {\small $a_1$};
				\draw[->, draw=black] (-0.7, 2.5) .. controls (0, 1) and (-2, 1) .. (-1.3, 2.5);
				\node at (-0.2,2) {\small $a_2$};
				\draw[->, draw=black] (2,3) -- (0,3); \node at (1.25,3.3) {\small $\pi_1$};
				\node[circle, draw=black] at (3,3) {};
				\draw[->, draw=black] (3.3, 3.5) .. controls (4, 5) and (2, 5) .. (2.7, 3.5);
				\node at (3.8,4) {\small $a_1$};
				\draw[->, draw=black] (3.3, 2.5) .. controls (4, 1) and (2, 1) .. (2.7, 2.5);
				\node at (3.8,2) {\small $a_2$};
				\draw[->, draw=black] (6,3) -- (4,3); \node at (5.25,3.3) {\small $\pi_2$};
				%%%%%
				%%%%%
				\node[circle, draw=black] at (10,4) {};
				\draw[->, draw=black] (10.3, 4.5) .. controls (11, 6) and (9, 6) .. (9.7, 4.5);
				\node at (10.8,5) {\small $a_3$};
				\draw[->, draw=black] (10.2, 2.5) .. controls (10.5, 3) .. (10.2, 3.5); \node at (10.8,2.5) {\small $a_2$};
				\draw[->, draw=black] (9.8, 3.5) .. controls (9.5, 3) .. (9.8, 2.5); \node at (9.2,2.5) {\small $a_1$};
				\node at (10.8,5) {\small $a_3$};
				\node[circle, draw=black] at (10,2) {};
				\draw[->, draw=black] (10.3, 1.5) .. controls (11, 0) and (9, 0) .. (9.7, 1.5);
				\node at (10.8,1) {\small $a_4$};
				\draw[->, draw=black] (13,3) -- (11,3); \node at (12.25,3.3) {\small $\pi_1$};
				\node[circle, draw=black] at (14,4) {};
				\draw[->, draw=black] (14.3, 4.5) .. controls (15, 6) and (13, 6) .. (13.7, 4.5);
				\node at (14.8,5) {\small $a_3$};
				\draw[->, draw=black] (14.2, 2.5) .. controls (14.5, 3) .. (14.2, 3.5); \node at (14.8,2.5) {\small $a_2$};
				\draw[->, draw=black] (13.8, 3.5) .. controls (13.5, 3) .. (13.8, 2.5); \node at (13.2,2.5) {\small $a_1$};
				\node at (14.8,5) {\small $a_3$};
				\node[circle, draw=black] at (14,2) {};
				\draw[->, draw=black] (14.3, 1.5) .. controls (15, 0) and (13, 0) .. (13.7, 1.5);
				\node at (14.8,1) {\small $a_4$};
				\draw[->, draw=black] (17,3) -- (15,3); \node at (16.25,3.3) {\small $\pi_2$};
				%%%%%%%%%%%%%
				\node at (2,-2) {$\pi_n: \begin{cases}
					1 \to 1\\
					2 \to 1221
					\end{cases}
					$};
				\node at (13,-2) {$\pi_n: \begin{cases}
					1 \to 1\\
					2 \to 23142\\
					3 \to 1423\\
					4 \to 2314
					\end{cases}
					$};
				\end{tikzpicture}
				\caption{Two examples of graph covers where the length/weight of arrows in $\GC_n$ doesn't tend to infinity.}
				\label{fig:gc_bounded-length}
			\end{center}
		\end{figure}
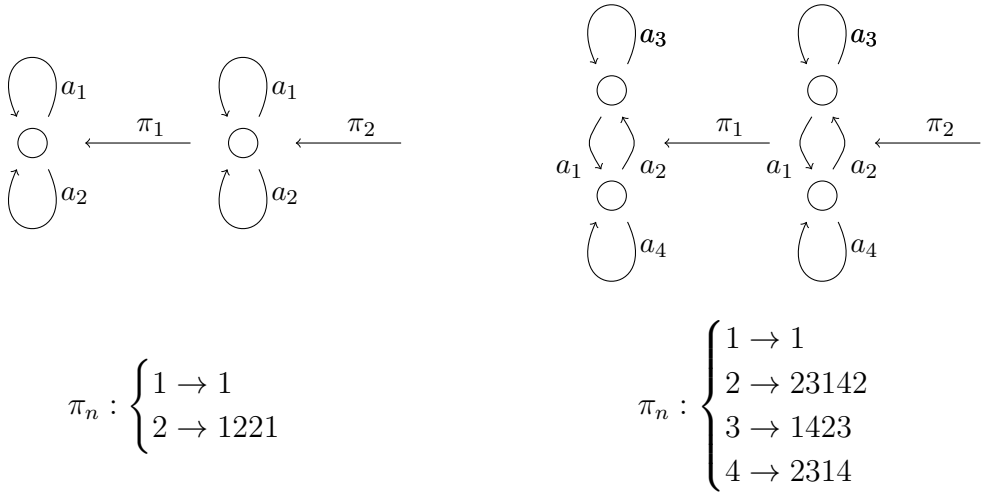
		
	\end{example}

	\begin{proof} %\hspace{-3mm}
		\begin{enumerate}
			\item The result on chain-transitivity is  \cite[Proposition 2.8]{Shi16}.
			
			\item $\Rightarrow$: Take $m\in \N$ arbitrary and $n\geq m$. Let $x\in \GC$ be a point with the dense orbit such that $\ell_{m}\subset\{f^{i}(x)_{m}\}_{i=0}^{k-1}$ and $x_{s}=f^{k}(x)_{s}$ for some $k\in\N$ and $s\in\{n,m\}$. Then the path $\ell_n=((f^{i}(x)_{n},f^{i+1}(x)_{n}))_{i=0}^{k-1}\subset \GC_{n}$ is a loop with $\ell_{m}\subset \pi_{m,n}(\ell_{n})$.

			$\Leftarrow$: Apply the hypothesis recursively to obtain a sequence of loops $\ell_n$ with $\ell_n \subset\pi_{n+1}(\ell_{n+1})$ such that $\ell_n$ passes through every edge of $\GC_n$.
			Without loss of generality, assume that $\pi_{n+1}$ maps the first edge $\gamma_{n+1}$ of $\ell_{n+1}$ to the first edge $\gamma_n$ of $\ell_n$. Then $\gamma=(\gamma_n)_{n \geq 0}\in \GC$ is a point with dense orbit in $\GC$.
			
			\item The result on minimality is contained in \cite[Theorem 3.5]{Shi16}; it is analogous to the fact that primitive S-adic shifts are minimal.
			To prove the ``if''-direction, take any neighbourhood $U$ in $\GC$; there is $m$ and a loop in $\ell_m \in \GC_m$ such that
			the cylinder set given by this loop is contained in $U$.
			Next take $(\gamma)_{n\geq 0} \in \GC$ arbitrary. Take $n(m)$ as in the condition.
			Then $\ell_m \in \pi_{m,n(m)}(\gamma_n)$, so the (two-sided) $f$-orbit of
			$(\gamma_n)_{n \geq 1}$ intersects $U$, proving minimality.
			(To show that the forward orbit of $(\gamma)_{n\geq 0}$ also visits
			$U$, we can take the successor loop $\gamma'_{n(m)}$ under $f$ and use the same argument.)
			
			To prove the ``only if''-direction, assume that for every $m$ and $n > m$, there are
			$\ell_m \in \GC_m$ and $\ell_n \in \GC_n$ such that $\pi_{m,n}(\ell_n) \not\owns \ell_m$.
			Since $\GC_m$ is finite, we can find a fixed $\ell \in \GC_m$ and a subsequence $n_k$
			such that $\pi_{m,n_k}(\ell_{n_k}) \not\owns \ell$.
			In particular $\ell \notin \pi_{m,n}(\gamma_n)$ for every
			$m < n \leq n_k$ and $\gamma_n \in \pi_{n,n_k}(\ell_{n_k})$.
			Take any accumulation point of  $(\gamma_n)_{n \geq 1}$
			of $\bigcap_{r \geq 1} \bigcup_{k \geq r} [\ell_{n_k}]$,
			where $ [\ell_{n_k}]$ indicates the cylinder set, then
			the $f$-orbit of $(\gamma_n)_{n \geq 1}$ is disjoint from
			$[\ell]$, so minimality fails.
			
			The property of minimality allows us to telescope the graph cover such that the winding matrices become strictly positive.
			
			\item Unique ergodicity is covered in \cite[Corollaries 4.24 and 4.25]{Shi16}, but the proof in terms of the winding matrices is no
			different from those for Bratteli-Vershik systems, see \cite{BKK17,BKMS13} and \cite[Section 6.3.3]{B23}.
		\end{enumerate}
	\end{proof}

	\textbf{Questions:}
	Does there exist a characterization of shadowing in terms of graph covers, cf.\ \cite{GM06}?
	How can one derive the spectrum of the Koopman operator $U_f$ from the shape of the graph cover?
	How is the dimension group computed?
	
	\subsection{Uniform rigidity}
	
	Rigidity, as a measure-theoretic concept, was introduced by Furstenberg \& Weiss \cite{FW73}, but its topological version (uniform rigidity) seems to have developed first in a paper by Glasner \& Maon \cite{GM89} and used also in \cite{GW93}. 
	
	\begin{definition}
		A map $f$ on a compact metric space $(X,d)$ is called
		{\bf uniformly rigid} if for every $\eps > 0$ there is an iterate $k \geq 1$ such that
		$d(x, f^k(x)) < \eps$ for all $x \in X$.
	\end{definition}

	\begin{theorem}\label{thm:ur}
		Let $(X,g)$ be a transitive continuous map on a Cantor set.
		Let $(\GC,f)$  be a graph cover representation of $(X,g)$.
		Then $g$ is uniformly rigid if and only if for every $n \in \N$,
		there is a loop $C_n = (C_n^i)_{i=0}^{\# C-1}$ (i.e., $\sbv(C_n^0) = \tbv(C_n^{\#C-1})$ but self-intersections are allowed) covering every edge of $\GC_n$,
		such that for every $y \in \GC$ there is $i \in \N$
		such that $f^j(y) \in [\sbv(C_{n}^{i+j \pmod {\#C}})]$ for all $j \geq 0$.
	\end{theorem}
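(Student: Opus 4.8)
The plan is to translate uniform rigidity into the combinatorics of the levels $\GC_n$ and then match it with the stated loop condition. First I would fix a compatible metric for which the clopen cylinders $[\gamma_n]=\{y\in\GC:y_n=\gamma_n\}$ behave well: the partitions $\{[\gamma_n]:\gamma_n\in\GC_n\}$ are clopen, refine as $n$ grows, and their diameters tend to $0$, so there are $\eps_n\downarrow 0$ with $d(y,y')<\eps_n$ exactly when $y,y'$ share their level-$n$ cylinder. Since $y_m=\pi_{m,n}(y_n)$ for $m\le n$, agreement at level $n$ forces agreement at all lower levels. Hence the combinatorial content of uniform rigidity is: \emph{for every $n$ there is a single period $k=k(n)\ge 1$ with $(f^k y)_n=y_n$ for all $y\in\GC$.} Replacing $y$ by $f^j y$ in this statement shows that, under uniform rigidity, the level-$n$ itinerary $j\mapsto (f^j y)_n$ of \emph{every} point is periodic with the one common period $k$.

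For the forward implication I would use transitivity to pick $x^\ast$ with dense $f$-orbit. Its level-$n$ itinerary is then $k$-periodic, and since $(f^kx^\ast)_n=x^\ast_n$ its first and last arrows meet, so $C_n:=\bigl((f^jx^\ast)_n\bigr)_{j=0}^{k-1}$ is a loop (self-intersections allowed); density guarantees it covers every edge of $\GC_n$. It remains to show that every $y$ follows $C_n$ up to a cyclic shift. Here I would prove the key \emph{synchronization lemma}: if $y_N=y'_N$ for some $N>n$, then $(f^jy)_n=(f^jy')_n$ for all $0\le j\le N-n$. The proof is by induction on $j$ using the positive directional axiom, packaged as the induced vertex map $\bar\pi_n$ on $\GC_{n-1}$ with $\sbv(\pi_n(\gamma))=\bar\pi_n(\sbv(\gamma))$ and $\tbv(\pi_n(\gamma))=\bar\pi_n(\tbv(\gamma))$: at a branch point the two successor arrows agree in source, and positive directionality forces their projections to agree in target as well, so one loses at most one level of agreement per step. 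Granting the lemma, I fix $y$, choose $N\ge n+k$, use density to find $T$ with $(f^Tx^\ast)_N=y_N$, and obtain $(f^jy)_n=(f^{T+j}x^\ast)_n=C_n^{(i+j)\bmod k}$ with $i:=T\bmod k$ for $0\le j\le k$; as both itineraries are $k$-periodic this extends to all $j\ge 0$, which is precisely the stated condition.

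For the converse I would set $k:=\#C$. The loop condition places $f^j(y)$ at position $C_n^{(i+j)\bmod\#C}$ for all $j$, so $f^{k}(y)$ and $y$ occupy the same level-$n$ position, i.e.\ $(f^{k}y)_n=y_n$, for every $y$. Given $\eps>0$, I then choose $n$ so large that every level-$n$ cylinder has diameter below $\eps$; this forces $d(y,f^{k}y)<\eps$ for all $y$, which is uniform rigidity.

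The main obstacle is the synchronization lemma, and more precisely the interplay of vertex data and arrow data at branch points. Positive directionality controls the targets of arrows sharing a source, which is exactly what keeps the projected itineraries locked together for a number of steps tending to infinity with $N$; the delicate point is to verify that the ``one level lost per step'' estimate really pins down the traversed \emph{edge} $C_n^{m}$ and not merely its endpoints, i.e.\ to reconcile the source description $[\sbv(C_n^{m})]$ with the arrow actually being traversed (the fact that $C_n$ covers every edge and is followed with a single global phase $i$ is what ties the two together, and telescoping between bi-special levels can be invoked if parallel edges need to be resolved). Once the lemma is established, both implications are short.
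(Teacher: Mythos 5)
Your proof is correct in substance. Your converse direction is essentially the paper's own argument (set $k=\#C_n$, observe that $y$ and $f^k(y)$ lie in the same level-$n$ cylinder, and choose $n$ so that these cylinders have diameter $<\eps$); note only that, like the paper, you should phrase this with the vertex cylinders $[\sbv(\cdot)]$, since the loop condition controls the source vertex of $(f^jy)_n$ and not the arrow itself, so your inference ``$(f^ky)_n=y_n$'' should read $\sbv((f^ky)_n)=\sbv(y_n)$. The forward direction, however, takes a genuinely different route. The paper has no synchronization lemma: after building $C_n$ from a dense orbit exactly as you do, it fixes $y$, extracts a subsequence $f^{i+r_tk}(x)\to y$ with \emph{constant} phase $i$ modulo $k$ (possible because there are only $k$ phases), and concludes $f^j(y)\in[\sbv(C_n^{(i+j)\bmod k})]$ from continuity of $f^j$ and closedness of cylinders --- a soft compactness argument. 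Your synchronization lemma is a correct, more quantitative alternative, with one repair: as you state it (arrow equality $(f^jy)_n=(f^jy')_n$) it can fail when $\GC_n$ has parallel arrows, and no telescoping is needed to fix this; what the induction via positive directionality actually yields is source agreement, $\sbv((f^jy)_n)=\sbv((f^jy')_n)$ for $0\le j\le N-n$ (if the sources agree at level $N-j$ after $j$ steps, positive directionality forces target agreement one level down, hence source agreement for the successor arrows at level $N-j-1$, and source agreement propagates to all lower levels since the bonding maps induce well-defined vertex maps). Since the theorem's conclusion involves only the cylinders $[\sbv(C_n^m)]$, this vertex-level lemma is all you need, and your worry about ``pinning down the traversed edge'' dissolves: state and prove the lemma for source vertices and drop the edge version. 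Comparing the two: the paper's argument is shorter and uses nothing beyond continuity of $f$ and density, while yours makes the mechanism explicit (agreement at level $N$ persists for $N-n$ steps at level $n$) and yields an effective phase-locking statement without any subsequence extraction.
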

	
	It follows that $\pi_{n+1}:C_{n+1} \to C_n$ is a $q_n$-cover of loops or some $q_n \in \N$,
	so $(\GC,f)$ is a factor of the $(q_n)$-odometer.
	Factors of odometers are (possibly degenerate) odometers themselves,
	so each transitive uniformly rigid map on a Cantor set is a
	(possibly degenerate) odometer. This makes the condition in Theorem~\ref{thm:ur}
	also a criterion for $(\GC,f)$ to represent an odometer, and also shows that transitive uniformly recurrent maps on a Cantor set are odometers.
	\\[4mm]
	\begin{proof}
		$\Leftarrow$: Choose $\eps > 0$ arbitrary and $n$ such that $2^{-n} < \eps$; hence all cylinder sets $[\gamma]$, $\gamma \in V_n$,
		have diameter $< \eps$.
		Take $k = \# C_n$. Then for every $x \in \GC$, $f^k(x)$ and $x$ belong to the same cylinder $[\gamma]$, $\gamma \in V_n$.
		Hence $d(f^k(x), x) < \eps$.
		
		$\Rightarrow$: Choose $\eps > 0$ arbitrary and $n \in \N$ such that $x,y \in \GC$ are $\eps$-close if and only if they belong to the same cylinder $[\gamma]$, $\gamma \in V_{n}$.
		Let $k \geq 1$ be such that $d(f^k(x),x) < \eps$ for all $x \in \GC$.
		
		Take $x \in \GC$ with a dense orbit.
		Let $C_n^i = (f^i(x)_n,f^{i+1}(x)_n)$ for $0 \leq i < k$
		and $C_n = \{ C_n^i \}_{i=0}^{k-1}$.
		Since $f^i(x) \in [\sbv(C_n^i)]$ and $f^{i+mk}(x)$ and $x$ are in the same cylinder $[C_n^i]$ for all $m \geq 0$, we have
		$\orb_f(x) \subset \bigcup_{i=0}^{k-1} [\sbv(C_n^i)]$.
		But $\orb_f(x)$ is dense in $\GC$, so $\{ C_n^i\}_{i=0}^{k-1} \supset E_n$.
		
		Now take $y \in \GC$ and $j \geq 0$ arbitrary, and find $i \in \{0, \dots, k-1\}$ and sequence $(r_t)_{t \geq 1}$ such that $f^{i+r_tk}(x) \to y$, and hence $y \in [\sbv(C_n^i)]$. For $t$ sufficiently large, also
		$d(f^{i+r_t k+j}(x), f^j(y)) < \eps$, so $y \in [\sbv(C_n^{i+j \pmod k})]$.
	\end{proof}

	\textbf{Questions:} How to characterize graph covers that are rigid from a measure-theoretical point of view:
	for every $\eps > 0$ and $A \in \cB$ with $\mu(A) > 0$ there is $n$ such that
	$\mu(A \triangle T^{-n}(A)) < \eps$.
	
	\subsection{Linear recurrence}
	
	To any graph cover with $\GC_0$ consisting of loops $\{a_1, \dots, a_d\}$ of weight $1$, we can assign a subshift $(X,\sigma)$
	by taking the itinerary $\iota:\bg = (\gamma_{n})_{n \geq 0} \mapsto (x_k)_{k \geq 0}$
	where $x_k = j$ if $f^k( \bg )_0 = a_j$.
	Then $X = \iota(\GC)$ and $\sigma \circ \iota = \iota \circ f$.
	
	\begin{definition}
		A subshift $(X,\sigma)$ is {\bf linearly recurrent} if there is $L > 0$ such that for every $x \in X$ every subword $u$ appearing in $x$ reappears in $x$ with gap at most $L|u|$.
	\end{definition}
	
	Linearly recurrent subshifts have zero entropy (in fact, they have linear word-complexity) and, if transitive, they are minimal, see \cite{DHS99}. As the gaps between word $u$ are finite,
	the following notion becomes useful:
	
	\begin{definition}\label{def:returnword}
		Given a word $u$, we call $w$ a {\bf return word} for $u$ if
		\begin{itemize}
			\item $u$ is a prefix and suffix of $wu$ but $u$ does not occur elsewhere in $wu$;
			\item $wu \in \cL(X)$, the language of the subshift $X$.
		\end{itemize}
		We denote the collection of return words of $u$ by $\cR_u$.
	\end{definition}
	
	In fact, $\# \cR_u \leq L(L+1)^2$, see \cite[Theorem 24]{DHS99}.
	The next theorem gives a sufficient condition for linear recurrence in terms of graph covers, and is inspired by \cite{Dur00}.
	We will assume that the graph cover $(\GC,f)$ is weighted and minimal, and indeed telescoped in such a way that $\pi_{n+1}(\gamma)$ covers all the edges of $\GC_n$ for all $n \geq 0$ and $\gamma \in \GC_{n+1}$.
	
	\begin{theorem}
		Given\footnote{For minimal systems, the gap-size is in fact independent of the choice of $\bg$.}  $\bg \in \GC$, define its gap-size
		$g^{(m)}(j) = \min\{i \geq 1 : f^j(\bg)_mf^{j+1}(\bg)_m
		= f^{j+i}(\bg)_mf^{j+i+1}(\bg)_m\}$.
		If there are only finitely many different winding matrices and
		$$
		D := sup\{g^{(m)}(j) : j \geq 1,m \geq 0\} < \infty,
		$$
		then the graph cover $(\GC,f)$ represents a  linearly recurrent subshift.
	\end{theorem}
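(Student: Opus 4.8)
The plan is to prove directly that the subshift $(X,\sigma)=(\iota(\GC),\sigma)$ carries a single linear–recurrence constant $L$, by showing that an arbitrary word $u\in\cL(X)$ reappears in the itinerary with gap at most $L|u|$. The whole argument hinges on choosing, for a word of length $\ell=|u|$, the unique level $m$ at which the weights of the level-$m$ arrows are comparable to $\ell$; then $u$ is trapped inside the word spelled by two consecutive level-$m$ arrows, and the gap-size hypothesis forces that pair of arrows — hence $u$ — to recur after boundedly many arrows.

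First I would record the two quantitative consequences of having only finitely many winding matrices. Since the cover has been telescoped so that $\pi_{n+1}(\gamma)$ runs through every edge of $\GC_n$, each winding matrix $W^m$ has all entries $\geq 1$, while finiteness bounds them above by some $K$. Writing $w_m^{\min}$ and $w_m^{\max}$ for the smallest and largest weight in $\GC_m$ and using the recursion $\vec w = W^m\vec w'$ from the text, the identity $(W^m\vec w')_\gamma=\sum_{\gamma'}W^m_{\gamma\gamma'}w(\gamma')$ gives $\sum_{\gamma'}w(\gamma')\leq w(\gamma)\leq K\sum_{\gamma'}w(\gamma')$ for every $\gamma$, whence the bounded-distortion estimate $w_m^{\max}\leq K\,w_m^{\min}$ holds for all $m$ (set $\lambda:=K$). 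The same computation, together with the covering property, shows that $w_m^{\min}$ is non-decreasing and tends to infinity while $w_{m+1}^{\min}\leq C\,w_m^{\min}$ for a constant $C$ depending only on $K$, $\lambda$ and the finitely many values of $\#\GC_m$. Consequently, for each $\ell\geq 1$ there is a level $m=m(\ell)$ with $w_{m-1}^{\min}\leq \ell<w_m^{\min}\leq C\ell$.

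Next I would carry out the localisation step. Fix $u$ with $|u|=\ell$ and take $m=m(\ell)$. Read the itinerary $x$ as the concatenation of the arrow-words $W(\gamma)=\pi_{0,m}(\gamma)$ along the level-$m$ arrow-path of $\bg$; each such word has length $w(\gamma)\geq w_m^{\min}>\ell$, so no arrow-word is contained in $u$. Hence any occurrence of $u$ meets at most two consecutive level-$m$ arrows, and I may attach to it the straddled ordered pair $\gamma\gamma'$ together with the fixed offset of $u$ inside $W(\gamma)W(\gamma')$. The gap-size hypothesis at level $m$ says this pair recurs along the level-$m$ arrow-path after at most $D$ further arrows; at each such reappearance the word $W(\gamma)W(\gamma')$ is identical, so the same offset produces a fresh occurrence of $u$. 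Converting from arrows to symbols costs at most $w_m^{\max}$ per intervening arrow, so from any occurrence of $u$ the next one lies at symbol-distance at most $D\,w_m^{\max}\leq D\lambda\,w_m^{\min}\leq D\lambda C\,\ell$. Thus every word recurs with gap at most $L|u|$ for $L:=D\lambda C$, and since the system is minimal this constant is uniform over all points, which is exactly linear recurrence.

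The step I expect to be most delicate is the bookkeeping that turns $g^{(m)}$ into a symbol gap. The quantity $g^{(m)}(j)$ must be read as counting recurrences of the $2$-block of level-$m$ arrows along the arrow-path, rather than per application of $f$ (under which a single weighted arrow is occupied for $w(\gamma)$ consecutive steps); with that reading the hypothesis $D<\infty$ is precisely a uniform bound on the number of level-$m$ arrows between successive occurrences of any arrow-pair, and the bounded-distortion estimate is what lets me turn this arrow count into the symbol gap $L|u|$. The two remaining points needing care are the claim that $u$ cannot span three arrows — which is exactly where $\ell<w_m^{\min}$ is used — and the verification that the offset of $u$ inside the straddled pair is preserved when that pair recurs, since $W(\gamma)$ depends only on $\gamma$.
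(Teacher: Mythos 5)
Your proposal is correct and follows essentially the same route as the paper's proof: trap $u$ in a window of two consecutive arrows at the unique level $m$ where $w_{m-1}^{\min}\leq |u| < w_m^{\min}$, read the hypothesis $D<\infty$ as a bound on the recurrence gap of arrow-pairs counted along the level-$m$ arrow path (exactly the reading the paper uses when it bounds a return word by $D\max_c w(c)$), and convert arrows to symbols via the bounded-distortion estimates that follow from having finitely many winding matrices. The only difference is bookkeeping of the constants --- you use the entry bounds $1\leq W_{\gamma\gamma'}\leq K$ and the comparability $w_{m+1}^{\min}\leq C\,w_m^{\min}$, where the paper works with extremal column sums $K_1,K_2$ of the winding matrices --- which yields the same conclusion with a constant of the same nature.
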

	
	\begin{proof}
		First, recall that $\pi_{n+1}(\gamma) \supset \GC_n$ for all $\gamma \in \GC_{n+1}$. Let $W^n$, $n \geq 1$ denote the winding matrices. Define
		$$
		K_1 := \max\left\{  \sum_{\gamma \in \GC_n} W^n_{\gamma, \gamma'} : n \geq 1, \gamma' \in \GC_{n-1}\right\}
		$$
		and
		$$
		K_2 := \min\left\{ \sum_{\gamma \in \GC_n} W^n_{\gamma, \gamma'} : n \geq 1, \gamma' \in \GC_{n-1}\right\} > 0.
		$$
		Write $K = K_2/K_1$. Then, for all $n \geq 1$ and $a,b \in \GC_n$:
		\begin{eqnarray*}
			\frac{w(a)}{w(b)}
			&=&
			\frac{(\1 W^n \cdots W^1)_a}{(\1 W^n \cdots W^1)_b}
			\leq
			\frac{K_2 \min_{c \in \GC_n} (\1 W^{n-1} \cdots W^1)_c}{K_1 \min_{c \in \GC_n} (\1 W^{n-1} \cdots W^1)_c} \leq K.
		\end{eqnarray*}
		Let $u$ be a subword of  $X$ such that $|u| \geq \min_{a \in \GC_N} w(a)$
		and $N'> N$ be minimal such that $|u| \leq \min_{a \in \GC_{N'}} w(a)$.
		In particular,
		$|u| > \min_{a \in  \GC_{N'-1}} w(a)$ and
		every appearance of $u$ is inside some word
		$\iota ( \pi_1 \circ \cdots \circ \pi_{N'}(ab))$, $ab \in \GC_{N'}^2$.
		Let $v$ be a return word to $u$.
		Since each path $ab$ in $\GC_{N'}$ appears with a gap at most $ D$,
		we have
		\begin{eqnarray*}
			|v| &\leq& D \max_{ c \in \GC_{N'}} w(c)
			\leq D\, K \min_{ c \in \GC_{N'}}  w(c)  \\
			&\leq& D\, K \max_{ c \in \GC_{N'-1}}  w(c)
			\cdot  \min_{ c \in \GC_{N'-1}} \sum_{b \in \GC_{N'}} W^{N'}_{b,c} \\
			&\leq& D\, K^2 \min_{ c \in \GC_{N'-1}} w(c)
			\cdot \min_{ c \in \GC_{N'-1}} \sum_{b \in \GC_{N'}} W^{N'}_{b,c}
			\\
			&\leq& D\, K^2 \ |u| \ \min_{ c \in \GC_{N'-1}} \sum_{b \in \GC_{N'}} W^{N'}_{b,c}.
		\end{eqnarray*}
		Since the lengths of the return words give the gaps between appearances of $u$,  linear recurrence follows with constant
		$$
		L = DK^2\max_{n \geq N} \min_{ c \in \GC_{n-1}} \sum_{b \in \GC_n} W^{n}_{b,c}.
		$$
	\end{proof}

	\section{Rauzy Graphs}
	
	Rauzy graphs were initiated by G\'erard Rauzy \cite{Rauzy}, and later frequently applied (e.g.\ \cite{Frid, Rote}) to construct and study subshifts with particular (complexity) properties.
	Every subshift $(X,\sigma)$ on the (finite) alphabet $\Alf$ can be represented by a sequence of {\bf Rauzy graphs}, which we will denote by $(\GC_n)_{n \geq 0}$ again.
	Here $\GC_0$ consist of a single vertex labeled $\epsilon$ (for the empty word) and $\#\Alf$ loops.
	For $n \geq 1$, the vertex set of $\GC_n$ is $\cL_n(X)$, i.e.,
	set of the words in $X$
	of length $n$, and the arrow set is $\cL_{n+1}(X)$, i.e.,
	the set of words of length $n+1$.
	If $\gamma \in \cL_{n+1}$ we write $\ld{\gamma}$ and $\rd{\gamma}$
	for the word $\gamma$ with the first and last letter removed, respectively.
	The source and target of the arrow $\gamma$ are
	$\sg(\gamma) = \rd{\gamma}$ and $\tg(\gamma) = \ld{\gamma}$
	or in other words
	$\gamma = (\rd{\gamma} \to \ld{\gamma})$.
	
	It is straightforward to turn this sequence of Rauzy graphs into  a graph cover, namely by the bonding maps $\pi_n$ given by
	$$
	\pi_n(\gamma) %= \pi( \rd{\gamma} \to \ld{\gamma} )
	= \rd{\gamma}.
	$$
	%that is, the concatenation $\gamma^{\dagger\dagger} \ \ld{\rd{\gamma}}$.
	On the level of vertices, this works by ``inclusion'':
	$\pi_n(u) = \rd{u}$.
	These bonding maps are automatically edge surjective and positive directional, but in general not negative directional.

	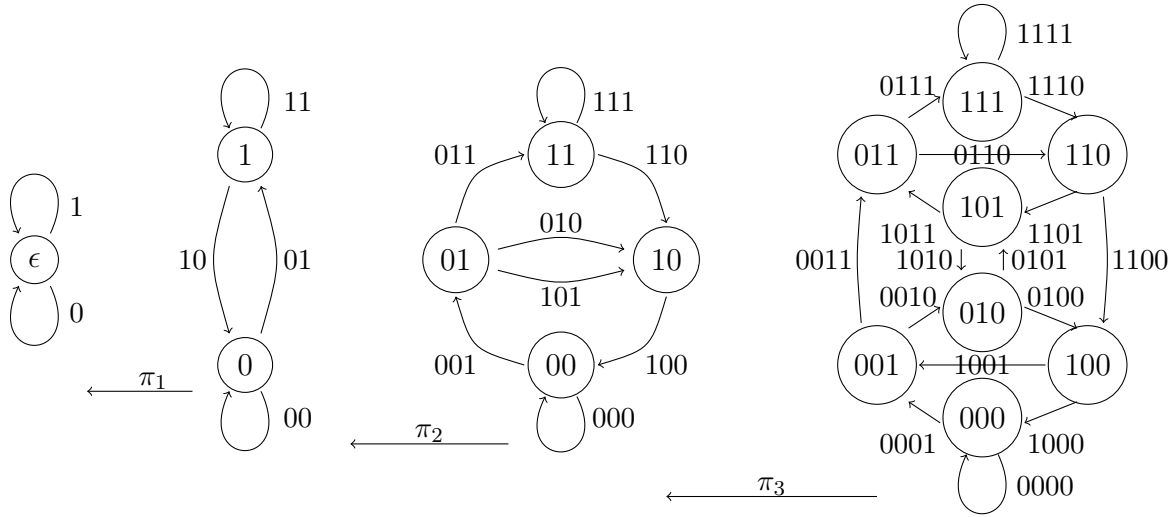
\begin{figure}[ht]
		\begin{center}
			\begin{tikzpicture}[scale=0.7]
			\node[circle, draw=black] at (-1,3) {$\epsilon$};
			\draw[->, draw=black] (-0.7, 3.5) .. controls (0, 5) and (-2, 5) .. (-1.3, 3.5);
			\node at (-0.2,4) {\small $1$};
			\draw[->, draw=black] (-0.7, 2.5) .. controls (0, 1) and (-2, 1) .. (-1.3, 2.5);
			\node at (-0.2,2) {\small $0$};
			\draw[->, draw=black] (2, 0.5) -- (0, 0.5); \node at (1.25,0.7) {\small $\pi_1$};
			\node[circle, draw=black] at (3,5) {$1$};
			\draw[->, draw=black] (3.3, 5.5) .. controls (4, 7) and (2, 7) .. (2.7, 5.5);
			\node at (4,6) {\small $11$};
			\draw[->, draw=black] (3.3, 1.6) .. controls (3.7, 3) .. (3.3, 4.4); \node at (4,3) {\small $01$};
			\draw[->, draw=black] (2.7, 4.4) .. controls (2.3, 3) .. (2.7, 1.6); \node at (2,3) {\small $10$};
			\node[circle, draw=black] at (3,1) {$0$};
			\draw[->, draw=black] (3.3, 0.5) .. controls (4, -1) and (2, -1) .. (2.7, 0.5);
			\node at (4,0) {\small $00$};
			\draw[->, draw=black] (8, -0.5) -- (5, -0.5); \node at (6.5,-0.3) {\small $\pi_2$};
			\node[circle, draw=black] at (9,5) {$11$};
			\node[circle, draw=black] at (7,3) {$01$};
			\node[circle, draw=black] at (11,3) {$10$};
			\draw[->, draw=black] (9.3, 5.6) .. controls (10, 7) and (8, 7) .. (8.7, 5.6);
			\node at (10,6) {\small $111$};
			\draw[->, draw=black] (7.8, 3.2) .. controls (9, 3.5) .. (10.2, 3.2); \node at (9,3.7) {\small $010$};
			\draw[->, draw=black] (7.8, 2.8) .. controls (9, 2.5) .. (10.2, 2.8); \node at (9,2.3) {\small $101$};
			\node[circle, draw=black] at (9,1) {$00$};
			\draw[->, draw=black] (9.3, 0.4) .. controls (10, -1) and (8, -1) .. (8.7, 0.4);
			\node at (10,0) {\small $000$};
			\draw[->, draw=black] (7, 3.7) .. controls (7.3, 4.7) .. (8.3,5); \node at (7,5) {\small $011$};
			\draw[->, draw=black] (8.3,1) .. controls (7.3, 1.3) .. (7, 2.3); \node at (7,1) {\small $001$};
			\draw[->, draw=black] (9.7, 5) .. controls (10.7, 4.7) .. (11,3.7); \node at (11,5) {\small $110$};
			\draw[->, draw=black] (11, 2.3) .. controls (10.7, 1.3) .. (9.7, 1); \node at (11,1) {\small $100$};
			\draw[->, draw=black] (15, -1.5) -- (11, -1.5); \node at (13,-1.3) {\small $\pi_3$};
			\node[circle, draw=black] at (15,5) {$011$};
			\draw[->, draw=black] (17.3, 6.8) .. controls (18, 8.2) and (16, 8.2) .. (16.7, 6.8);
			\node at (18.2,7.3) {\small $1111$};
			\draw[<-, draw=black] (19.3, 1.8) .. controls (19.4, 3) .. (19.3, 4.2); \node at (20,3) {\small $1100$};
			\draw[<-, draw=black] (14.7, 4.2) .. controls (14.6, 3) .. (14.7, 1.8); \node at (14,3) {\small $0011$};
			\node[circle, draw=black] at (15,1) {$001$};
			\draw[->, draw=black] (17.3, -0.7) .. controls (18, -2.2) and (16, -2.2) .. (16.7, -0.7);
			\node at (18.2,-1.3) {\small $0000$};
			\node at (17,5) {\small $0110$};\node at (17,1) {\small $1001$};
			\node[circle, draw=black] at (17,4) {$101$};
			\node[circle, draw=black] at (17,2) {$010$};
			\node[circle, draw=black] at (17,6) {$111$};
			\node[circle, draw=black] at (17,0) {$000$};
			\node[circle, draw=black] at (19,5) {$110$};
			\node[circle, draw=black] at (19,1) {$100$};
			\draw[->, draw=black] (17.4, 2.8) -- (17.4, 3.2); \node at (18.1,3) {\small $0101$};
			\draw[<-, draw=black] (16.6, 2.8) -- (16.6, 3.2); \node at (15.9,3) {\small $1010$};
			
			\draw[->, draw=black] (15.8, 5) -- (18.2, 5);
			\draw[->, draw=black] (18.2, 1) -- (15.8, 1);
			\draw[->, draw=black] (15.6, 5.7) -- (16.2, 6.1); \node at (15.6,6.3) {\small $0111$};
			\draw[->, draw=black] (17.8, 6.1) -- (18.8, 5.7); \node at (18.4,6.3) {\small $1110$};
			\draw[->, draw=black] (16.2, -0.1) -- (15.6, 0.3) ; \node at (18.4,-0.5) {\small $1000$};
			\draw[->, draw=black] (18.8, 0.3) -- (17.8, -0.1); \node at (15.6,-0.5) {\small $0001$};
			\draw[->, draw=black] (16.2, 3.9) -- (15.6, 4.3) ; \node at (18.4,3.5) {\small $1101$};
			\draw[->, draw=black] (18.8, 4.3) -- (17.8, 3.9); \node at (15.6,3.5) {\small $1011$};
			\draw[->, draw=black] (15.6, 1.7) -- (16.2, 2.1); \node at (15.6,2.3) {\small $0010$};
			\draw[->, draw=black] (17.8, 2.1) -- (18.8, 1.7); \node at (18.4,2.3) {\small $0100$};
			\end{tikzpicture}
			\caption{Graph cover for the one-sided full shift on $\Alf = \{ 0,1\}$.}
			\label{fig:gc_ful_shift}
		\end{center}
	\end{figure}

	\subsection{On the number of ergodic $f$-invariant measures}
	
	In analogy to shifts, we call a vertex of a graph
	{\em left-special} if it has at least two incoming edges, {\em right-special} if it has at least two outgoing edges, and
	{\em bi-special} if it is both left-special and right-special.
	
	\begin{prop}\label{prop:gap}
		Let $\GC_n$ be the Rauzy graph of a subshift over the alphabet
		$\{ 0,1\}$, with $f$-invariant measure $\mu$. 
		Assume that there are $l_n$ left-special vertices, $r_n$ right-special vertices and $b_n$ bi-special vertices.
		Then the number $M_n$ of different values $\mu(Z)$ for $n$-cylinders
		is at most $3(r_n+b_n)$.
	\end{prop}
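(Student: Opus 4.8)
The plan is to exploit the two conservation laws that an invariant measure imposes on the Rauzy graph and to show that, away from the branch points, the cylinder measure is locally constant, so that the only ``fresh'' values can be created at the special vertices. First I would record the two identities valid for every $w \in \cL_n(X)$: by finite additivity $\mu([w]) = \sum_{a:\, wa \in \cL_{n+1}} \mu([wa])$, and by $\sigma$-invariance of $\mu$, $\mu([w]) = \sum_{a:\, aw \in \cL_{n+1}} \mu([aw])$. Writing $e(\gamma) := \mu([\gamma])$ for each arrow $\gamma \in \cL_{n+1}(X)$, these say exactly that $e$ is a flow on $\GC_n$: the total $e$-mass of the arrows leaving $w$, and the total entering $w$, both equal $\mu([w])$. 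Over the binary alphabet each sum has at most two terms, so a non-right-special vertex has $\mu([w]) = e(\gamma)$ for its unique outgoing arrow, and a non-left-special vertex has $\mu([w]) = e(\gamma)$ for its unique incoming arrow.

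Next I would prove a propagation lemma: along any maximal directed path $\gamma_1,\dots,\gamma_k$ all of whose interior vertices $v_i = \tg(\gamma_i) = \sg(\gamma_{i+1})$ are \emph{regular} (neither left- nor right-special), the flow is constant, $e(\gamma_1)=\dots=e(\gamma_k)$, and this common value equals $\mu([v_i])$ for every interior $v_i$. Indeed, at a regular vertex both the unique-incoming and unique-outgoing identities apply, forcing $e(\gamma_i)=\mu([v_i])=e(\gamma_{i+1})$. Call such maximal paths \emph{chains}. A chain cannot be extended backwards precisely when the source of its first arrow is special (otherwise that source would be regular and its incoming arrow could be prepended), so each chain is started by exactly one outgoing arrow of a special vertex, and distinct chains use distinct such arrows. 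Counting outgoing arrows at special vertices, right-special and bi-special vertices contribute two each and purely left-special vertices one each; using the standard binary identity $l_n = r_n$ (both equal $\#\cL_{n+1}(X) - \#\cL_n(X)$), the number of chains is $2r_n + l_n - b_n = 3r_n - b_n$. Hence $e$ takes at most $3r_n - b_n$ distinct values.

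Finally I would assemble the count. Every non-bi-special vertex $w$ is non-left-special or non-right-special, so $\mu([w])$ equals the flow value on one of its incident arrows, i.e.\ one of the at most $3r_n-b_n$ values above; the only vertices whose measure need not be a flow value are the $b_n$ bi-special ones, where $\mu([w])$ is a genuine sum of two arrow masses. Adding these at most $b_n$ extra values yields
$$
M_n \ \le\ (3r_n - b_n) + b_n \ =\ 3r_n \ \le\ 3(r_n + b_n),
$$
which is in fact slightly sharper than claimed (and is tight for Sturmian subshifts, where $r_n=1$ and the three values are exactly those of the three-distance theorem).

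The main obstacle is the bookkeeping at the special vertices: proving cleanly that the chains are in bijection with the outgoing arrows of special vertices and, crucially, that no fresh value can hide at a purely left- or right-special vertex. A secondary point is the degenerate case of atomic or periodic pieces, which produce all-regular cycles and would inflate the chain count; I would dispose of this by reducing to $\mu$ ergodic (hence supported on a transitive subsystem, on which the restricted graph is strongly connected with no all-regular cycles), the general case following by integrating over ergodic components.
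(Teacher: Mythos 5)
Your proof is correct and follows essentially the same route as the paper's: your ``chains'' with regular interior vertices are exactly the paper's partition of $\GC_n$ into directed paths delimited by special vertices (with each bi-special vertex split into a left-special and a right-special one joined by an arrow), and your flow-propagation lemma is precisely the paper's observation that invariance of $\mu$ forces all vertices along such a path to carry the same mass. Where you go beyond the paper is the bookkeeping: by putting chains in bijection with the outgoing arrows of special vertices you obtain the slightly sharper bound $M_n \leq 3r_n$ (under your convention that bi-special vertices are counted among the $l_n$ left- and $r_n$ right-special ones; with the disjoint convention the same count gives exactly the paper's $3(r_n+b_n)$), whereas the paper settles for ``simple counting shows at most $3(r_n+b_n)$.'' One caution on your closing remark: the appeal to ergodic decomposition does not work as stated, because the number of distinct values of $\mu$ on cylinders is not controlled by the corresponding numbers for the ergodic components --- convex combinations create fresh values. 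But that device is only invoked to handle all-regular cycles, and such a cycle is necessarily an isolated connected component of $\GC_n$ corresponding to a periodic piece of the subshift; there the proposition itself fails (a single periodic orbit has $M_n = 1 > 0 = 3(r_n+b_n)$), and the paper's own path-partition proof tacitly assumes such components are absent. So the right fix is to make that aperiodicity assumption explicit, as the paper implicitly does, rather than to decompose.
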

	
	\begin{proof}
		First note that $l_n = r_n$, since for the two-letter alphabet, each vertex has at most two incoming and two outgoing arrows.
		We can partition $\GC_n$ in paths or vertices that are
		\begin{itemize}
			\item paths starting with a left-special vertex and ending in a right-special vertex (both included);
			\item paths starting with a left-special vertex (included) to a left-special vertex (not included);
			\item paths from a right-special vertex (not inclusive) ending with a right-special vertex (included);
			\item bi-special vertices.
			(There is no change in the result if we pull every bi-special vertex into a left-special and right-special vertex connected by a single arrow.)
		\end{itemize}
		By invariance of the measure, all the vertices (recall that the vertices of $\GC_n$ for the $n$-cylinder sets) in each of the above paths must have the same mass.
		Simple counting shows that the number of such paths at most $3(r_n + b_n)$, and the proposition follows.
	\end{proof}
	
	Special cases are Sturmian shifts, where this result is known as the three-gap theorem, originally proven by S\'os, \cite{Sos57,Sos58}.
	For the full shift, every vertex is bi-special, and
	$M_n = p_n$ is sharp for an $n$-step memory Markov measure.

	\begin{prop}\label{prop:erg_meas}
		Let $\GC_n$ be the Rauzy graph of a subshift over the alphabet
		$\{ 0,1\}$.
		Assume that there are $l_n$ left-special vertices, $r_n$ right-special vertices and $b_n$ bi-special vertices.
		Then the number of ergodic invariant probability  measures
		is bounded by $N := \liminf_{n\to\infty} r_n+b_n + 1$.
	\end{prop}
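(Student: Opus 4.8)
The plan is to realise the invariant measures as an inverse limit of flow polytopes on the Rauzy graphs and then to bound the dimension of that limit. An $f$-invariant probability measure $\mu$ is determined by its cylinder values, and shift-invariance is exactly Kirchhoff's law on each $\GC_n$: writing $\nu_n(\gamma)=\mu([\gamma])$ for an arrow $\gamma\in\GC_n$ (a word of length $n+1$), the identities $\mu([w])=\sum_{a}\mu([wa])=\sum_{b}\mu([bw])$ say that at every vertex $w\in\cL_n(X)$ the incoming flow equals the outgoing flow. Thus $\nu_n$ is a nonnegative circulation on $\GC_n$ of total mass $\sum_{\gamma\in\GC_n}\nu_n(\gamma)=1$; let $\cM_n$ be the compact convex polytope of all such normalised circulations, with projections $p_n\colon\mu\mapsto\nu_n$. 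These projections jointly separate points (cylinders of all lengths generate), so $\cM$ embeds in $\IL(\cM_n)$. By the classical fact that the invariant measures of a continuous map on a compact metric space form a Choquet simplex whose extreme points are the ergodic measures, the quantity to bound equals $\dim\cM+1$ when $\cM$ is finite-dimensional (and both sides are $\infty$ otherwise).

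First I would bound $\dim\cM_n$. Over the alphabet $\{0,1\}$ every vertex has out-degree $1$ or $2$, with out-degree $2$ exactly at the right-special vertices, so $\#\GC_n=\#\cL_n(X)+r_n$. Assuming, as is implicit here, that the subshift is transitive so that each $\GC_n$ is connected, the circulation space has dimension equal to the cycle rank $\#\GC_n-\#\cL_n(X)+1=r_n+1$; imposing the single affine constraint "total mass $=1$" cuts this by one, giving $\dim\cM_n=r_n\le r_n+b_n$. Equivalently, one can read this off the path-partition in the proof of Proposition~\ref{prop:gap}: the circulation is constant along each non-branching segment, so it is free only in its values at the branch points.

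It then remains to pass to the limit, and this is the step requiring the most care. It suffices to prove $\dim\cM\le\liminf_n\dim\cM_n$, and for this I would show that whenever $\cM$ contains $m+1$ affinely independent points $\mu_0,\dots,\mu_m$, one has $\dim\cM_n\ge m$ for all large $n$. Indeed, linear independence of the differences $\mu_i-\mu_0$ is witnessed by finitely many cylinder functionals, say supported on cylinders of length $\le L$; these factor through $p_n$ for every $n\ge L$, so $p_n(\mu_0),\dots,p_n(\mu_m)$ stay affinely independent and hence $\dim\cM_n\ge\dim p_n(\cM)\ge m$ for all $n\ge L$. Taking $m=\dim\cM$ gives $\liminf_n\dim\cM_n\ge\dim\cM$, and combined with $\dim\cM_n\le r_n+b_n$ this yields $\dim\cM\le\liminf_n(r_n+b_n)$, so the number of ergodic measures is at most $\liminf_n(r_n+b_n)+1=N$. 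The main obstacle is precisely this last passage: one must check that affine independence, which lives in the infinite-dimensional space of measures, is already detected at finite levels $n$ lying in the $\liminf$-subsequence, and one should keep track of the transitivity hypothesis, without which disconnected Rauzy graphs — e.g.\ $X=\{0^\infty,1^\infty\}$, where $r_n=b_n=0$ but there are two ergodic measures — already violate the stated bound.
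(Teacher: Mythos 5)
Your proof is correct, and while its level-$n$ dimension count coincides in substance with the paper's, the passage to the limit follows a genuinely different route. The paper also bounds the degrees of freedom of an invariant measure on $\GC_n$: it contracts the graph along non-branching segments (your observation that a circulation is constant there), writes invariance at the special vertices as a linear system $\mu = M\mu$, and bounds the null-space by Gaussian elimination — after splitting bi-special vertices this gives dimension at most $r_n + b_n + 1$, the same count your cycle-rank computation $E - V + 1 = r_n + 1$ delivers more conceptually. The divergence is in the second half: the paper restricts each ergodic measure to the levels $n_k$ realizing the liminf, uses $\E(\mu_{n_{k+1}} \mid \cF_{n_k}) = \mu_{n_k}$ and the Martingale Theorem to recover $\mu = \lim_k \mu_{n_k}$, and concludes that there are ``only $N$ possibilities'' for each $\mu_{n_k}$; you instead prove the inequality $\dim \cM \le \liminf_n \dim \cM_n$ directly, by witnessing affine independence of measures through finitely many cylinder functionals that factor through $p_n$ for all large $n$ (hence in particular along the liminf subsequence), and then count extreme points of the finite-dimensional Choquet simplex. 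Your version is arguably the more airtight: the paper's final counting is loose about in what sense the restriction of an ergodic measure can take only $N$ values (such a restriction need not be an extreme point of the level-$n_k$ polytope), whereas your liminf inequality is a clean finite-dimensional statement that sidesteps this. Finally, your closing caveat is a genuine catch, not mere caution: the proposition as printed carries no transitivity hypothesis, yet both your cycle-rank formula and the paper's elimination argument pick up an extra dimension per connected component of $\GC_n$, and your example $X = \{0^\infty, 1^\infty\}$ — where $r_n = b_n = 0$ but there are two ergodic measures — shows the stated bound actually fails without connectedness, which transitivity (irreducibility of the language) supplies.
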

	
	This bound is comparable with the bound $\liminf_n \#\{ \text{simple loops in } \GC_n\}$ from \cite{GM06},
	but sometimes an over-estimate. For Sturmian shifts, it would give at most two ergodic measures, although  Sturmian shifts are uniquely ergodic.
	However, if each $\GC_n$ is a vertex with two loops (so $b_n = 1$) and
	the winding matrices are 
	$$
	 W^n = \begin{pmatrix}n^2 & 1 \\ 1 & n^2\end{pmatrix},
	$$ 
	then the corresponding graph cover indeed has two measures,
	see \cite{BKK17}.
	
	In \cite[Corollary 4.3]{DF17} the sharper bound is given that translates to $N = \liminf_{n\to\infty} r_n+b_n$, but their result applies only to subshifts, and that puts a uniform bound on the entries of the winding matrices.
	\\[4mm]
	\begin{proof}
		As in the proof of Proposition~\ref{prop:gap},
		we partition the graphs $\GC_n$ in paths $p$. We can replace
		each bi-special vertex by a left-special vertex and right-special vertex connected with an arrow.
		Write $\sg(p)$ for the special vertex at the beginning and $\tg(p)$ for the special vertex at the end of $p$.
		If $\mu$ is an $f$-invariant measure on $\GC_n$, then the following equations must hold:
		$$
		\begin{cases}
		\mu(p) + \mu(p') = \mu(p'') & \text{ if } \sg(p) = \sg(p') = \tg(p''),\\
		\mu(p) = \mu(p') + \mu(p'') & \text{ if } \tg(p) = \sg(p') = \sg(p'').
		\end{cases}
		$$
		Writing this as matrix equation, we find $\mu = M \mu$, where we consider $\mu$ as a column vector with $\mu(p)$ as entries.
		The matrix $M = (m_{pp'})$ has the following relations:
		$$
		\begin{cases}
		m_{pp} = m_{pp'} = -1, \ m_{pp''} = 1 & \text{ if } \sg(p) = \sg(p') = \tg(p''),\\
		m_{pp} = -1, \  m_{pp'} = m_{pp''} = 1 & \text{ if } \tg(p) = \sg(p') = \sg(p'').
		\end{cases}
		$$
		Gaussian elimination in the matrix $M$ gives that the null-space has dimension at most $r_n + 1$, and contains a unit simplex of dimension at most $r_n$.
		Ergodic measures correspond to the extremal points of these simplices, of which there are at most $r_n+1$.
		
		Let $(n_k)_{k \in \N}$ be the subsequence such that $r_{n_k}+b_{n_k}+1 = N$.
		Each ergodic measure $\mu$ of the graph cover restricts to a
		measure $\mu_{n_k}$ on $\GC_0 \leftarrow \GC_0 \leftarrow \dots \leftarrow \GC_{n_k}$. Let $\cF_n$ denote the algebra generated by the  $n$-cylinders. Hence, the atoms of
		$\mu_{n_k}$ are exactly the atoms of $\cF_{n_k}$, and
		the conditional expectation $\E(\mu_{n_{k+1}} | \cF_{n_k}) = \mu_{n_k}$.
		Passing to a subsequence if necessary, the Martingale Theorem
		gives that $\lim_k \mu_{n_k} = \mu$, and in fact $\mu$ is ergodic.
		But for each $k$, there are only $N$ possibilities for $\mu_{n_k}$,
		so there cannot be more than $N$ ergodic measures.
	\end{proof}

	\subsection{Rauzy graphs of the Sturmian shift}
	
	The {\bf Rauzy graph}\index{Rauzy graph} $\GC_n$ of a Sturmian subshift\index{Sturmian shift} $X$ is the word-graph
	in which the vertices are the words $u \in \cL_n(X)$ and there is
	an arrow $u \to u'$ if $ua = bu'$ for some $a,b \in \{ 0 , 1\}$.
	Hence $\GC_n$ has $p(n)=n+1$ vertices and $p(n+1)= n+2$ edges; it is the vertex-labeled
	transition graph of the $n$-block shift\index{block shift}\index{shift!block} interpretation of the Sturmian shift.
	
	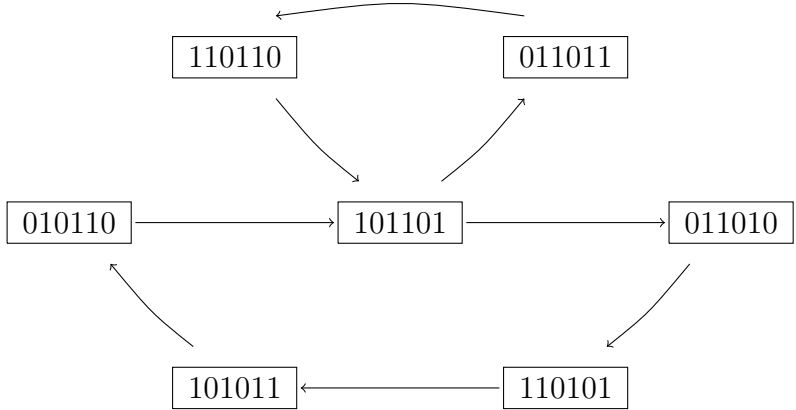
\begin{figure}[ht]
		\begin{center}
			\begin{tikzpicture}[scale=1.1]
			%\node[circle, draw=black] at (5,2) {\tiny $101101$};
			\draw[-] (4.25,1.75) -- (5.75,1.75) -- (5.75,2.25) -- (4.25,2.25) -- (4.25,1.75); \node at (5,2){$101101$};
			\draw[->, draw=black] (5.5, 2.5) .. controls (6, 2.9) .. (6.5, 3.5);
			%\node[circle, draw=black] at (7,4)  {\tiny $011011$};
			\draw[-] (6.25, 3.75) -- (7.75, 3.75) -- (7.75, 4.25) -- (6.25, 4.25) -- (6.25, 3.75); \node at (7,4){$011011$};
			\draw[->, draw=black] (6.5, 4.5) .. controls (5, 4.7) .. (3.5, 4.5);
			%\node[circle, draw=black] at (3,4)  {\tiny $110110$};
			\draw[-] (2.25, 3.75) -- (3.75, 3.75) -- (3.75, 4.25) -- (2.25, 4.25) -- (2.25, 3.75); \node at (3,4){$110110$};
			\draw[->, draw=black] (3.5, 3.5) .. controls (4, 2.9) .. (4.5, 2.5);
			\draw[->, draw=black] (5.8, 2) .. controls (6.5, 2) .. (8.2, 2);
			%\node[circle, draw=black] at (9,2)  {\tiny $011010$};
			\draw[-] (8.25, 1.75) -- (9.75, 1.75) -- (9.75, 2.25) -- (8.25, 2.25) -- (8.25, 1.75); \node at (9,2){$011010$};
			\draw[->, draw=black] (8.5, 1.5) .. controls (8, 0.9) .. (7.5, 0.5);
			%\node[circle, draw=black] at (7,0)  {\tiny $110101$};
			\draw[-] (6.25, -0.25) -- (7.75, -0.25) -- (7.75, 0.25) -- (6.25, 0.25) -- (6.25, -0.25); \node at (7,0){$110101$};
			\draw[->, draw=black] (6.2, 0) .. controls (5, 0) .. (3.8,0);
			%\node[circle, draw=black] at (3,0)  {\tiny $101011$};
			\draw[-] (2.25, -0.25) -- (3.75, -0.25) -- (3.75, 0.25) -- (2.25, 0.25) -- (2.25, -0.25); \node at (3,0){$101011$};
			\draw[->, draw=black] (2.5, 0.5) .. controls (2, 0.9) .. (1.5, 1.5);
			%\node[circle, draw=black] at (1,2)  {\tiny $010110$};
			\draw[-] (0.25, 1.75) -- (1.75, 1.75) -- (1.75, 2.25) -- (0.25, 2.25) -- (0.25, 1.75); \node at (1,2){$010110$};
			\draw[->, draw=black] (1.8, 2) .. controls (2.5, 2) .. (4.2, 2);
			\end{tikzpicture}
			\caption{The Rauzy graph $\GC_6$ based on the Fibonacci Sturmian sequence
				%$101\ 10\ 101\ 10110\ 10110101\ 1011010110110\cdots$.
			}
			\label{fig:Rauzy}
		\end{center}
	\end{figure}
	
	In the example of Figure~\ref{fig:Rauzy}, the word
	$u = 101101$ is bi-special, but only
	$0u0, 0u1$ and $1u0 \in \cL(X)$ (i.e., $u$ is a regular bi-special word).\index{regular bi-special word}
	Since $p(n+1)-p(n) = 1$ for a Sturmian sequence, every Rauzy graph contains exactly
	one left-special and one right-special word, and they may be merged into a
	single bi-special word.
	Hence, there are two types of Rauzy graphs, see Figure~\ref{fig:Rauzy_types}.
	
	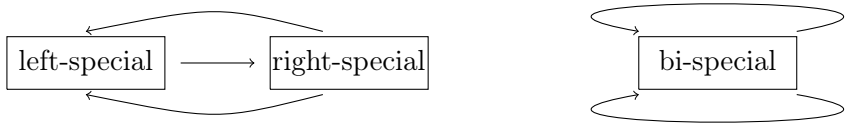
\begin{figure}[ht]
		\begin{center}
			\begin{tikzpicture}[scale=0.7]
			\draw[->] (16, 2.1) .. controls (20, 2.8) and (9,2.8) .. (13, 2.1);
 			\draw[->] (16, 0.9) .. controls (20, 0.2) and (9,0.2) .. (13, 0.9);
			\draw[-] (1,1) -- (4,1) -- (4,2) -- (1,2) -- (1,1); \node at (2.5,1.5){\small left-special};
			\draw[-] (6,1) -- (9,1) -- (9,2) -- (6,2) -- (6,1); \node at (7.5,1.5){\small right-special};
			\draw[->] (4.3,1.5) -- (5.7,1.5);
 			\draw[->] (7, 2.1) .. controls (5, 2.6) .. (2.5, 2.1);
 			\draw[->] (7, 0.9) .. controls (5, 0.4) .. (2.5, 0.9);
 			\draw[-] (13,1) -- (16,1) -- (16,2) -- (13,2) -- (13,1); \node at (14.5,1.5){\small bi-special};
			\end{tikzpicture}
			\caption{The two types of Rauzy graphs for a Sturmian sequence.}
			\label{fig:Rauzy_types}
		\end{center}
	\end{figure}
	
	The transformation from $\GC_n$ to $\GC_{n+1}$ is as follows:
	\begin{itemize}
		\item[(a)] If $\GC_n$ is of the first type, then the middle path decreases by one vertex,
		and the upper and lower path increase by one vertex. The left-special vertex of $\GC_n$
		splits into two vertices,
		with outgoing arrows leading to the previous successor vertex which now becomes left-special.
		Similarly, the right-special vertex of $\GC_n$ is split into two vertices with incoming arrows
		emerging from the previous predecessor vertex, which now becomes right-special.
		\item[(b)] If $\GC_n$ is of the second type, then one of the two paths becomes
		the central path in $\GC_{n+1}$, the other path becomes the upper path
		of $\GC_{n+1}$, and there is an extra arrow in $\GC_{n+1}$ from
		the right-special word to the left-special word. Thus the bi-special vertex of $\GC_n$ is split into two vertices,
		one of which becomes left-special in $\GC_{n+1}$, and one of the predecessors of the bi-special vertex in $\GC_n$
		becomes right-special in $\GC_{n+1}$.
	\end{itemize}

	We can combine all Rauzy graphs into a single inverse limit space
	$$
	\GC = \IL(\GC_n,\pi_n)
	= \{ (\gamma_n)_{n \geq 0} : \pi_{n+1}(\gamma_{n+1}) = \gamma_n \in \GC_n \text{ for all } n \geq 0\},
	$$
	where $\GC_0$ has only on vertex $\epsilon$ and one arrow $\epsilon \to \epsilon$,
	and $\pi_{n+1}:\GC_{n+1} \to \GC_n$ is the prefix map $\pi_{n+1}(ua) = u$ for every
	$u \in \cL_n(X)$, $a \in \Alf$.
	It is the inverse of the map described in items (a) and (b) above.

	\section{Kakutani-Rokhlin partitions}

	An important tool in the study of dynamical systems $(X,T)$ are
	(sometimes called induced maps) $T_Y:Y \to Y$ to subsets $Y \subset X$.
	These are defined by
	$$
	T_Y(y) = T^r(y)\ \text{ for the {\bf return time} }\ r = r(y) := \min\{ i > 0 : T^i(y) \in Y\}.
	$$
	The %notion of Kakutani equivalence\index{Kakutani equivalence},
	exhaustion of the space (if $(X,T)$ is minimal) is
	\begin{equation}\label{eq:exhaust}
	X = \sqcup_{i \geq 0} T^i(\{ y \in Y : i < r(y) \}),
	\end{equation}
	and the Rokhlin Lemma \cite[Theorem 3.10]{Rudolph} in the measure-preserving setting
	are classical techniques associated with first return maps.
	
	For continuous minimal (or at least aperiodic) transformations of a Cantor set, this led to
	a generalization of \eqref{eq:exhaust}, called Kakutani-Rokhlin partition.
	The seminal paper is by Herman et al.\ \cite{HPS92}, who coined the name.
	
	\begin{definition}\label{def:KR-partition}
		Let $(X,T)$ be a continuous dynamical system on a Cantor set. A
		{\bf Kakutani-Rokhlin (KR) partition}\index{Kakutani-Rokhlin partition|textbf}\index{KR-partition (Kakutani-Rokhlin)} is a partition
		$$
		\cP = \Big\{ T^j(B_i) \Big\}_{i=1, j=0}^{N, h_i - 1}
		$$
		of $X$ into clopen\index{clopen} sets that are pairwise disjoint and together cover $X$.
		We call $B = \cup_{i=1}^N B_i$ the {\bf base}\index{base} of the KR-partition
		and the integers $h_i$ the {\bf heights}\index{height}.
		Also we assume that $T^{h_i}(B_i) \subset B$. (If $T$ is invertible, this is automatic.)
	\end{definition}
	Usually we need a sequence $(\cP_n)_{n \geq 0}$ of KR-partitions, with bases $B(n)$
	and height vectors $(h_i(n))_{i=1}^{N_n}$, having the following properties:
	\begin{enumerate}
		\item[{\bf (KR1)}] The sequence of bases is nested: $B(n+1) \subset B(n)$, and
		\item[{\bf (KR2)}] $\cP_{n+1} \succeq \cP_n$, that is: $\cP_{n+1}$ refines $\cP_n$.
		\item[{\bf (KR3)}] $\bigcap_n B(n)$ is a single point.
		\item[{\bf (KR4)}] $\{ A \in \cP_n : n \in \N\}$ is a basis of the topology\index{basis of the topology}.
	\end{enumerate}
	The following property (KR5) relies crucially on minimality. Property (KR6) is optional, but ensures that
	there is a unique smallest path in the context of Bratteli-Vershik systems\index{Bratteli-Vershik system}
	and cutting-and-stacking systems\index{cutting-and-stacking}.
	\begin{enumerate}
		\item[{\bf (KR5)}] For all $n \in \N$, $i \leq N(n)$ and $i' \leq N(n-1)$, there is $0 \leq j < h_i(n)$
		such that $T^j(B_i(n)) \subset B_{i'}(n-1)$.
		\item[{\bf (KR6)}] $B(n)\subset B_1(n-1)$ for all $n \in \N$.
	\end{enumerate}
	
	The transition from nested sequence of Kakutani-Rokhlin partitions to graph covering
	is fairly direct: The vertices of the $n$-th graph $\GC_n$ are the elements of $\cP_n$,
	and there are arrows $T^j(B_i(n)) \to T^{j+1}(B_i(n))$ if $0 \leq j < h_i(n)-1$,
	and also $T^{h_i(n)-1} (B_i(n)) \to B_{i'}(n)$ if $T^{h_i(n)} (B_i(n)) \cap B_{i'}(n) \neq \void$.
	As bonding maps $\pi_n:\GC_n \to \GC_{n-1}$ we take the inclusion:
	$\pi_n(A) = A'$ for vertices $A$ of $\GC_n$
	and $A'$ of $\GC_{n-1}$ if $A \subset A'$.
	Then inverse limit space
	$$
	\GC = \IL(\GC_n,\pi_n)
	= \{ (\gamma_n)_{n \geq 0} : \pi_{n+1}(\gamma_{n+1}) = \gamma_n \in \GC_n \text{ for all } n \geq 0\},
	$$
	is the graph covering and (KR6) provides the positive directional property\index{positive directional}.
	The dynamics $f:\GC \to \GC$ is by following the arrows as in equation~\eqref{eq:follow-arrow}.
	
	\section{Bratteli-Vershik systems}

	An {\bf ordered Bratteli diagram}\index{ordered Bratteli system} 
	$(E_i,V_i,<)_{i \geq 1}$ is an infinite graph consisting
	of
	\begin{itemize}
		\item a sequence of finite non-empty vertex sets $V_i$, $i \geq 0$, and there is an additional $V_0$ consisting of a single vertex $v_0$;
		\item a sequence finite non-empty edge sets $E_i$, $i \geq 1$, such that
		each edge $e \in E_i$ connects a vertex $\sbv(e) \in V_{i-1}$ to a vertex $\tbv(e)
		\in V_i$. (Here $\sbv$ and $\tbv$ stand for {\bf source} and {\bf target}.)
		For every $v \in V_{i-1}$, there exists at least one outgoing edge $e \in E_i$
		with $v = \sbv(e)$, and for every $v \in V_i$ there exists at least one
		incoming edge $e \in E_i$ with $v = \tbv(e)$;
		\item for each $v \in \cup_{i \geq 1} V_i$,
		a total order $<$ between its incoming edges.
	\end{itemize}
	The path space
	$$
	X_{\BV} := \{ (x_i)_{i \geq 1} : x_i \in E_i, \tbv(x_i) = \sbv(x_{i+1}) \text{ for all } i \in \N\}
	%\subset \prod_{i = 1}^\infty E_i
	$$
	is the collection of all infinite edge-labeled\index{edge-labeled}
	paths starting from $v_0$, endowed with product topology\index{product topology}.
	That is, the set of infinite paths with a common initial $n$-path is clopen\index{clopen}, and all sets of this type form a basis of the topology.

	The {\bf Vershik adic transformation} ({\bf Vershik map})\index{Vershik map} $\tau:X_{\BV} \to X_{\BV}$
	is defined as follows \cite{Ver81}:
	For $x \in X_{\BV}$, let $i$ be minimal such that $x_i \in E_i$
	is not the maximal incoming edge.
	Then put
	$$
	\begin{cases}
	\tau(x)_j = x_j \quad \text{ for  } j > i, \\
	\tau(x)_i \text{ is the successor of } x_i \text{ among all incoming edges at this level},\\
	\tau(x)_1 \dots \tau(x)_{i-1} \text{ is the minimal path
		connecting } v_0 \text{ with } \sbv(\tau(x)_i).
	\end{cases}
	$$
	If no such $i$ exists, then $x \in X_{\BV}^{\max}$, and we need to choose
	$y \in X_{\BV}^{\min}$ to define $\tau(x) = y$.
	Whether $\tau$ extends continuously
	%in which case the ordered Bratteli
	%diagram is called {\bf perfect}\index{perfect} or {\bf decisive}\index{decisive})
	to $X_{\BV}^{\max}$ depends on how well we can make this choice.
	Medynets \cite{Med06} gave an example of a Bratteli diagram that doesn't allow any ordering
	by which $\tau$ is continuously extendable, even if $\# X_{\BV}^{\min} = \# X_{\BV}^{\max}$.
	For this diagram the only incoming edges to $u \in V_n$ come from $u \in V_{n-1}$, and therefore there is a minimal and a maximal path going through
	vertices $u$ only. By the same token, there is a minimal and a maximal path going through vertices $w$ only.
	No matter how $\tau$ is defined on these two maximal paths, there is no way of putting an order
	on the incoming edges to $v \in V_n$ such that this definition makes $\tau$ continuous at these maximal paths.
	
	A graph covering map $f:\GC \to \GC$ on the weighted graph cover
	$$
	\GC = \IL(\GC_n,\pi_n)
	= \{ (\gamma_n)_{n \geq 0} : \pi_{n+1}(\gamma_{n+1}) = \gamma_n \in \GC_n \text{ for all } n \geq 0\}
	$$
	(where $\GC_0$ is a single loop from a single vertex)
	can be turned in a Bratteli-Vershik system as follows.
	The edges $a \in \GC_n$ correspond bijectively to the vertices $v= V_n$ of an ordered Bratteli diagram.
	We call the bijection $P_n$.
	If the bonding map $\pi_n$ maps $a$ to the concatenation $\pi_n(a) = a_1 \cdots a_k$
	of edges in $\GC_{n-1}$, then we draw $k$ incoming edges $e_i \in E_n$ to $v = P_n(a)$ from the vertices
	$P_{n-1}(a_i) = \sbv(e_i) \in V_{n-1}$, $i = 1,2, \dots, k$ in this order.
	The map $f:\GC \to \GC$ will then lift to the Vershik map $\tau$ on the path space $X_{BV}$.
	
	\begin{remark}
		This construction doesn't guarantee that the emerging Bratteli diagram
		allows a unique (or useful) definition of the Vershik map.
		For example, if we start with the graph cover of Figure~\ref{fig:gc_ful_shift}, then we obtain the full binary tree as Bratteli diagram. Every path in it is both minimal and maximal, and there is no sensible way of defining $\tau$ on it
		that represents in any way the one-sided shift.
	\end{remark}

	Conversely, when given an ordered Bratteli diagram $(E_n, V_n)_{n \geq 1}$,
	the vertices  $v \in V_n$ correspond bijectively to edges $a = P_n^{-1}(v)$ in $\GC_n$.
	The ordered set of incoming edges $e_i \in E_n$ to $v$ determine the bonding map $\pi_n$
	by the concatenation $\pi_n(a) = a_1 \dots a_k$ if $P_{n-1}(a_i) = \sbv(e_i)$.
	The Vershik map $\tau$ then translates to the graph covering map $f$.
	The more involved step of how to connect the edges of $\GC_n$, i.e., how to determine the vertices of $\GC_n$, follows from the following lemma.
	
	\begin{lemma}\label{lem:eq}
		Given $v,w \in V_n$, let $[v^{\min}]$ indicate the cylinder set given by the minimal path from $v$ to $v_0$, and $[w^{\max}]$ indicates the cylinder set given by the minimal path from $w$ to $v_0$.
		\begin{enumerate}
			\item 
			For $v,v' \in V_n$, if there is $w \in V_n$ such that
			\begin{equation}\label{eq:sbv}
			\tau([w^{\max}]) \cap [v^{\min}] \neq \void \neq \tau([w^{\max}]) \cap [{v'}^{\min}],
			\end{equation}
			then $\sbv(P^{-1}(v)) = \sbv(P^{-1}(v'))$.
			\item 
			For $w,w' \in V_n$, if there is $v \in V_n$ such that
			\begin{equation}\label{eq:tbv}
			\tau([w^{\max}]) \cap [v^{\min}] \neq \void \neq \tau([{w'}^{\max}]) \cap [v^{\min}],
			\end{equation}
			then $\tbv(P^{-1}(v)) = \tbv(P^{-1}(v'))$. 
		\end{enumerate}
	\end{lemma}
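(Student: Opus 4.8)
The plan is to route everything through the conjugacy between the Bratteli path space $X_{\BV}$ and the graph cover $\GC$ set up in the paragraph preceding the lemma, under which the Vershik map $\tau$ is carried to the graph-covering map $f$ of \eqref{eq:follow-arrow}. Once the cylinders $[v^{\min}]$ and $[w^{\max}]$ are located inside $\GC$, both assertions collapse to a single fact: one step of $f$ takes the top floor of an arrow $b\in\GC_n$ to the bottom floor of an arrow $a\in\GC_n$ with $\sg(a)=\tg(b)$, which is exactly what the ``follow the arrow'' rule \eqref{eq:follow-arrow} encodes.

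First I would identify the two cylinders geometrically. The arrow $P^{-1}(v)\in\GC_n$ has weight equal to the number of paths from $v_0$ to $v$, and these paths, ordered by the Vershik order, are precisely the successive positions (``floors'') passed while $f$ sweeps through $P^{-1}(v)$ from its source to its target. I claim the minimal path to $v$ is the bottom floor, i.e.\ the position at $\sg(P^{-1}(v))$, and the maximal path to $w$ is the top floor, the position at $\tg(P^{-1}(w))$. This I would prove by induction on $n$, unwinding the concatenation rule $\pi_n(P^{-1}(v))=a_1\cdots a_k$ that defines the bonding maps: the minimal incoming edge to $v$ selects the first sub-arrow $a_1$, with $\sg(a_1)=\sg(\pi_n(P^{-1}(v)))$, and the maximal edge selects $a_k$, with $\tg(a_k)=\tg(\pi_n(P^{-1}(v)))$; thus sitting at the bottom (resp.\ top) floor at level $n$ is compatible, under $\pi_n$, with sitting at the source (resp.\ target) at level $n-1$, which is the inductive hypothesis. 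Here I would invoke that $\pi_n$ respects the vertex identifications (if $\sg(\gamma)=\tg(\gamma')$ then $\sg(\pi_n\gamma)=\tg(\pi_n\gamma')$), so that ``being at the source/target'' is a level-independent notion.

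With this in hand the computation is immediate. A point of $[w^{\max}]$ has level-$n$ arrow $b:=P^{-1}(w)$ occupied at its top floor; by \eqref{eq:follow-arrow}, after one application of $f$ its level-$n$ arrow becomes an arrow $a$ with $\tg(b)=\sg(a)$, occupied at the bottom floor, so the point lies in $[P(a)^{\min}]$. Hence $\tau([w^{\max}])\cap[v^{\min}]\neq\void$ forces $\sg(P^{-1}(v))=\tg(P^{-1}(w))$. Assertion (1) now reads off directly: applied to the common $w$ and to $v,v'$ it gives $\sg(P^{-1}(v))=\tg(P^{-1}(w))=\sg(P^{-1}(v'))$, whence $\sbv(P^{-1}(v))=\sbv(P^{-1}(v'))$. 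Assertion (2) is the mirror statement: if $\tau([w^{\max}])$ and $\tau([w'^{\max}])$ both meet a common $[v^{\min}]$, then $\tg(P^{-1}(w))=\sg(P^{-1}(v))=\tg(P^{-1}(w'))$, giving equality of the target vertices $\tbv(P^{-1}(w))=\tbv(P^{-1}(w'))$.

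I expect the main obstacle to be the inductive identification in the second paragraph, namely matching the Vershik order on the paths into $v$ with the left-to-right order in which $f$ sweeps through $P^{-1}(v)$, and certifying that $\tau$ leaving the top floor of a tower corresponds exactly to $f$ stepping off $\tg(P^{-1}(w))$ into a new arrow. This is precisely where the ordering of incoming edges, the concatenation rule for $\pi_n$, and the positive directional property must be combined: the positive directional property is what makes ``the source of the next arrow'' a single, unambiguous vertex of $\GC_n$ independent of the continuation at higher levels, so that the vertex equalities asserted in (1) and (2) are well posed in the first place.
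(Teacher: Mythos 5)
Your core computation is the right one, and it coincides with the paper's: everything reduces to showing that \eqref{eq:sbv} forces $\sg(P^{-1}(v)) = \tg(P^{-1}(w))$, after which both items follow by two applications, exactly as you write (including your tacit correction of the misprint in item (2), whose conclusion should read $\tbv(P^{-1}(w)) = \tbv(P^{-1}(w'))$). Your identification of $[v^{\min}]$ with the bottom floor and $[w^{\max}]$ with the top floor of the corresponding arrow is also correct.

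There is, however, a circularity in how you reach the key step, and it matters because of the role this lemma plays. The lemma belongs to the converse direction --- given an ordered Bratteli diagram, build the graph cover --- and, as the paper states immediately before the lemma, determining the vertices of $\GC_n$ (i.e., which edges share a source or a target) is precisely what the lemma is for. At that stage there is no graph cover $(\GC,f)$ yet: the rule \eqref{eq:follow-arrow} cannot even be formulated, since it presupposes knowing when $\tg(\gamma_n) = \sg(\gamma'_n)$, and the conjugacy between $\tau$ and $f$ that you ``route everything through'' only exists once the vertex identifications are in place. So your step ``$f$ leaves the top floor of $b$ and enters the bottom floor of an $a$ with $\sg(a)=\tg(b)$'' assumes the conclusion. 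The repair is to argue combinatorially on the diagram itself, which is what the paper does: if $\tau([w^{\max}]) \cap [v^{\min}] \neq \void$, take a witness $x \in [w^{\max}]$ with $\tau(x) \in [v^{\min}]$ and let $m > n$ be the first level at which $x$ is not maximal; then $u = \tbv(x_m) \in V_m$ carries a path from $w$ and the successor path through $v$, so $P^{-1}(w)P^{-1}(v)$ occurs as two consecutive symbols in the concatenation $\pi_{n+1} \circ \cdots \circ \pi_m(P^{-1}(u))$ --- a statement about strings of edge labels that needs no prior vertex structure --- and this adjacency is exactly what forces the identification $\sg(P^{-1}(v)) = \tg(P^{-1}(w))$. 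Note that this subpath condition is also the content of the ambiguity-resolution clause attached to \eqref{eq:follow-arrow}, so unwinding your appeal to $f$ lands precisely on the paper's argument; your floor induction, while correct, is then dispensable.
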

	
	\begin{proof}
		If $\tau([w^{\max}]) \cap [v^{\min}] \neq \void$, then there must be some $m > n$ and $u \in V_m$ such that there are paths $w \to u$ and successor path
		$v \to u$.
		Keeping in mind that vertices in $V_n$ correspond to edges in $\GC_n$,
		this translates into:
		There is $m > n$ and $\gamma \in \GC_m$ such that
		$P^{-1}(w) P^{-1}(v)$ is a subpath of $\pi_{n+1} \circ \cdots \circ \pi_m(\gamma)$ and in particular $\sbv(P^{-1}(v)) = \tbv(P^{-1}(w))$.
		
		If the same holds for $w$ and $v'$, so $P^{-1}(w) P^{-1}(v')$ is a subpath of $\pi_{n+1} \circ \cdots \circ \pi_{m'}(\gamma')$,
		then $\sbv(P^{-1}(v)) = \sbv(P^{-1}(v')) = \tbv(P^{-1}(w))$.
		
		This proves (i). The proof for (ii) is analogous.
	\end{proof}
	
	If we write $v \sim_{\sbv} v'$ if \eqref{eq:sbv} holds, the $\sim_{\sbv}$ is a reflexive, symmetric relation, but not necessarily transitive, as the next example shows.
	Therefore we need to take the transitive hull. Shimomura  \cite{Shi20}\footnote{although he fails to take the transitive hull.}
	calls the equivalence classes of $\sim$ {\bf clusters}.
	Edges $\gamma, \gamma' \in \GC_n$ have $\sbv(\gamma) = \sbv(\gamma')$ if and only they are in the same cluster.
	The analogous statement can be made as a necessary and sufficient 
	condition for $\tbv(\gamma) = \tbv(\gamma')$.
	
	\begin{example}
		Figure~\ref{fig:eq} shows the graph cover and Bratteli-Vershik representation of the stationary substitution shift based on the substitution
		$$
		\chi: \begin{cases}
		1 \to 12,\\
		2 \to 13,\\
		3 \to 123.
		\end{cases}
		$$
		Each graph $\GC_n$ has only one vertex, even though $1 \not\sim_{\sbv} 2$.
		This shows that taking the transitive hull of $\sim_{\sbv}$ is essential
		to get an equivalence relation.
		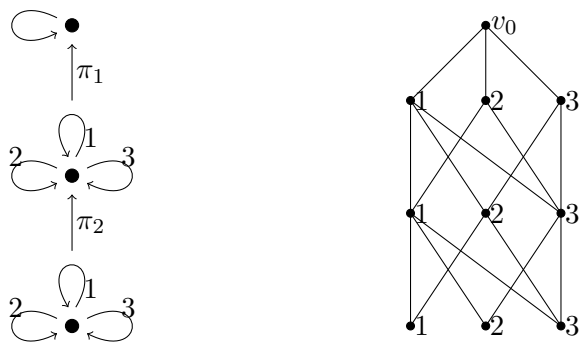
\begin{figure}[ht]
			\begin{center}
				\begin{tikzpicture}[scale=0.5]
				\filldraw(-1,7) circle (5pt);
				\draw[->, draw=black] (-1.4, 7.2) .. controls (-3, 8) and (-3, 6) .. (-1.4, 6.8);
				\draw[->, draw=black] (-1,5) -- (-1,6.5); \node at (-0.5,5.7) {\small $\pi_1$};
				\filldraw(-1,3) circle (5pt);
				\draw[->, draw=black] (-0.9, 3.5) .. controls (0, 5) and (-2, 5) .. (-1.1, 3.5);
				\node at (-0.5,4) {\small $1$};
				\draw[->, draw=black] (-1.4, 3.2) .. controls (-3, 4) and (-3, 2) .. (-1.4, 2.8);
				\node at (-2.5,3.5) {\small $2$};
				\draw[->, draw=black] (-0.6, 3.2) .. controls (1, 4) and (1, 2) .. (-0.6, 2.8);
				\node at (0.5,3.5) {\small $3$};
				\draw[->, draw=black] (-1,1) -- (-1,2.5); \node at (-0.5,1.7) {\small $\pi_2$};
				\filldraw(-1,-1) circle (5pt);
				\draw[->, draw=black] (-0.9, -0.5) .. controls (0, 1) and (-2, 1) .. (-1.1, -0.5);
				\node at (-0.5,0) {\small $1$};
				\draw[->, draw=black] (-1.4, -0.8) .. controls (-3, 0) and (-3, -2) .. (-1.4, -1.2);
				\node at (-2.5,-0.5) {\small $2$};
				\draw[->, draw=black] (-0.6, -0.8) .. controls (1, 0) and (1, -2) .. (-0.6, -1.2);
				\node at (0.5,-0.5) {\small $3$};
				\filldraw(10,7) circle (3pt); \node at (10.5,7) {\small $v_0$};
				\draw[-, draw=black] (10,7) -- (8,5);
				\draw[-, draw=black] (10,7) -- (10,5);
				\draw[-, draw=black] (10,7) -- (12,5);
				\filldraw(8,5) circle (3pt); \node at (8.3,5) {\small $1$};
				\filldraw(10,5) circle (3pt); \node at (10.3,5) {\small $2$};
				\filldraw(12,5) circle (3pt); \node at (12.3,5) {\small $3$};
				\draw[-, draw=black] (8,2) -- (8,5);
				\draw[-, draw=black] (8,2) -- (10,5);
				\draw[-, draw=black] (10,2) -- (12,5);
				\draw[-, draw=black] (10,2) -- (8,5);
				\draw[-, draw=black] (12,2) -- (10,5);
				\draw[-, draw=black] (12,2) -- (12,5);
				\draw[-, draw=black] (12,2) -- (8,5);
				\filldraw(8,2) circle (3pt); \node at (8.3,2) {\small $1$};
				\filldraw(10,2) circle (3pt); \node at (10.3,2) {\small $2$};
				\filldraw(12,2) circle (3pt); \node at (12.3,2) {\small $3$};
				\draw[-, draw=black] (8,-1) -- (8,2);
				\draw[-, draw=black] (8,-1) -- (10,2);
				\draw[-, draw=black] (10,-1) -- (12,2);
				\draw[-, draw=black] (10,-1) -- (8,2);
				\draw[-, draw=black] (12,-1) -- (10,2);
				\draw[-, draw=black] (12,-1) -- (12,2);
				\draw[-, draw=black] (12,-1) -- (8,2);
				\filldraw(8,-1) circle (3pt); \node at (8.3,-1) {\small $1$};
				\filldraw(10,-1) circle (3pt); \node at (10.3,-1) {\small $2$};
				\filldraw(12,-1) circle (3pt); \node at (12.3,-1) {\small $3$};
				\end{tikzpicture}
				\caption{The graph cover and Bratteli-Vershik representation of a substitution shift.}
				\label{fig:eq}
			\end{center}
		\end{figure}
	\end{example}
	
	\section{S-adic transformations}

	Instead of using a single substitution to create an infinite word $\rho \in \Alf^\N$,
	we can use a sequence of substitutions
	$\chi_n : \Alf_n \to \Alf_{n-1}^*$ (where $\Alf_{n-1}^*$ denotes the set of non-empty finite words in the alphabet $\Alf_{n-1}$), potentially between different alphabets $\Alf_n$.
	Thus
	\begin{equation}\label{eq:rho_adic}
	\rho = \lim_{n \to \infty} \chi_1 \circ \chi_2 \circ \dots \circ \chi_n(a_n), \qquad a_n \in \Alf_n.
	\end{equation}
	A priori, the limit need not exist, or can depend on the choice of letters $a_n \in \Alf_n$,
	but if $\rho$ exists and is an infinite sequence, then we have the following definition.
	Ferenczi \cite{Fer} was the first to call such systems S-adic,
	and they gave e.g.\ handy tool to describe rotations on the circle \cite{MH40}, \cite[Section 6.3]{fogg} and the torus \cite{AR}, and their renormalizations.

	\begin{definition}\label{def:S_adic}
		Let $S$ be a collection\footnote{Some, but not all, authors require $S$ to be finite.}
		of substitutions $\chi$ and choose $\chi_n \in S$
		such that alphabets match: $\chi_n:\Alf_n \to \Alf_{n-1}^*$.
		Assume that the sequence $\rho$ defined in \eqref{eq:rho_adic} exists and is infinite, and let
		$X_\rho = \overline{\orb_\sigma(\rho)}$. Then $(X_\rho, \sigma)$ is called an S-adic shift.
	\end{definition}

	For S-adic transformation given by substitutions $\chi_n:\Alf_n \to \Alf^*_{n-1}$ for $n \geq 1$, the graph cover is
	given by graphs $\GC_n$ consisting of a single vertex and $\# \Alf$ edges labeled by the letters of $\Alf$.
	The bonding maps $\pi_n = \chi_n$, i.e., the $\pi_n$-image of
	the loop $a \in \GC_n$ is the concatenation
	of loops in $\GC_{n-1}$ in the order given by $\chi_n(a)$.
	The winding matrix $W_n$ is exactly the associated matrix of the substitution $\chi_n$.
	
	Conversely, if all $\GC_n$'s consist of a single vertex with some loops,
	the substitutions $\chi_n$ can be read off the way $\pi_n$ acts on these loops. For different structures of directed graphs, it is not so easy
	(and most of the time impossible) to associate S-adic transformations to them.
	
	\subsection{Interval exchange transformations}
	An interval exchange transformation is based on a partition of $[0,1)$ into $d \in \N$ intervals
	$I_j = [q_{j-1}, q_j)$ for $0 = q_0 < q_1 < \dots < q_d=1$, with lengths $\lambda_j = q_j-q_{j-1}$, so
	$\lambda_1 + \dots + \lambda_d = 1$.
	It rearranges these intervals according to a permutation $\zeta$ of $\{ 1, \dots, d\}$, i.e.,
	$$
	T(x) = x-q_j+q_{\zeta(j)} \qquad \text{ if } x \in I_j.
	$$
	One can analyse $T$ using first return maps to
	subintervals $J$ that are the union of adjacent intervals, taken either from $\{ I_j\}_{j=1}^d$
	or from $\{ T(I_j) \}_{j=1}^d$.
	Keane \cite{K75} showed that in this case the first return map $T': J \to J$
	has at most $d$ intervals of continuity $I'_j$; in fact, if the so-called {\em Keane condition}
	``$T^n(q_j) \neq q_k$ for all $1 \leq j \leq k < d$ and $n \in \Z \setminus \{ 0 \}$'' holds, then $T'$ has exactly $d$ intervals $I'_j$ of continuity. Moreover, Keane's condition implies that $T$ is minimal.

	Symbolically, taking the first return map can be described using a substitution $\chi$ on the alphabet $\Alf = \{ 1, \dots, d\}$:
	\begin{eqnarray*}
	\chi(a) &=& a_0a_2 \dots a_{r(a)-1} \in \cA^*, \\
	T^j(I'_a) &\subset& I_{a_j} \text{ for }
	0 \leq j < r(a) := \inf\{ n \geq 1 : T^n(I_a) \subset J\}.
	\end{eqnarray*}
    Recursively, we can obtain a sequence of interval exchange transformations $T_0 = T:J_0 \to J_0 := [0,1)$
    and $T_n:J_n \to J_n \subset J_{n-1}$, with substitutions $\zeta_n$, and having $\{I^n_j\}_{j=1}^d$ as intervals of continuity. We have corresponding substitutions
    $(\chi_n)_{n \geq 1}$ on $\cA$ and the infinite sequence $\rho$
    as in \eqref{eq:rho_adic}, where the symbols $a_n$ are such that $I^n_{a_n} \subset J_{n+1}$ for all $n \geq 1$,
    expresses the itinerary of $x \in \bigcap J_n$
    w.r.t.\ the original partition $\{ I_j\}$.

    As such, in the corresponding weighted graph cover, every graph $\GC_n$ is a single vertex with $d$ loops, and $\pi_n$ wraps
    $\gamma_a \in \GC_n$ around $\GC_{n-1}$ in the order prescribed by the substitution $\chi_n$.

\begin{example}
    A special case is the Rauzy induction, see \cite{Via06} or \cite[Section 4.4]{B23}. In this case,
    \begin{eqnarray*}
\begin{array}{l}
 \text{\bf Type 0:} \\[3mm]
J_n = J_{n-1} \setminus  T_{n-1}(I^{n-1}_e), 
\end{array}  & \chi_n : \begin{cases}
            a \mapsto a & a \leq e\\
            (e+1) \to ed \\
            a \mapsto (a-1) & a > e+1 \
           \end{cases}
\text{ if } |I^{n-1}_d| > |I^{n-1}_e|, \\[2mm]
\begin{array}{l}
 \text{\bf Type 1:} \\[3mm] J_n = J_{n-1} \setminus  I^{n-1}_d, \end{array} \qquad \ \ &
  \chi_n : \begin{cases}
            a \mapsto a \qquad\quad & a \neq e \qquad   \\
            e \mapsto ed
           \end{cases}
\text{ if } |I^{n-1}_d| < |I^{n-1}_e|,   \\
    \end{eqnarray*}
where $e = \zeta_{n-1}^{-1}(d)$ is such that the interval $T_n(I^{n-1}_e)$ is adjacent to the right endpoint of $J_n$.
\end{example}

The other direction, i.e., recognizing which graph covers represent interval exchange transformations, is complicated,
because which sequences of substitutions
$(\chi_n)_{n \geq 1}$ (and of permutations $(\zeta_n)_{n \geq 1}$ ) are allowed is not easy to track.
If Rauzy induction is used, then at least the allowed sequences
of permutations are given by paths in the so-called Rauzy classes
\cite[Section 6]{Via06}, but Rauzy induction provides
only one class of representations.
For example, Gjerde \& Johansen \cite{GJ02} used a representation in which $J_n = J_{n-1} \setminus I^{n-1}_d$ for all $n \geq 1$.

\subsection*{Acknowledgements}
We gratefully acknowledge the support of the Austrian Exchange Service (WTZ Project PL 15/2022) and co-financing of the Polish National Agency for Academic Exchange under contract no. PPN/BAT/2021/1/00024/U/00001.

	\bigskip\bigskip
\noindent
J. Boro\'nski\\	
Department of Differential Equations\\
Faculty of Mathematics and Computer Science\\
Jagiellonian University\\
ul. {\L}ojasiewicza 6, 30-348 Krak\'ow, Poland\\
%website: \href{https://matinf.uj.edu.pl/en_GB/pracownicy/wizytowka?person_id=Jan_Boronski}\\
e-mail: jan.boronski@uj.edu.pl\\
\smallskip

\noindent
H. Bruin\\
Faculty of Mathematics\\ 
University of Vienna\\
Oskar Morgensternplatz 1\\
Vienna, Austria\\
e-mail: henk.bruin@univie.ac.at\\
\smallskip

\noindent
P. Kucharski\\	
Department of Differential Equations\\
Faculty of Mathematics and Computer Science\\
Jagiellonian University\\
ul. {\L}ojasiewicza 6, 30-348 Krak\'ow, Poland\\
%website: \href{https://matinf.uj.edu.pl/en_GB/pracownicy/wizytowka?person_id=Jan_Boronski}\\
e-mail: przemyslaw.kucharski@doctoral.uj.edu.pl
\end{document}